\numberwithin{equation}{section}
\newtheorem{theorem}{Theorem}[section]
\newtheorem{proposition}[theorem]{Proposition}
\newtheorem{corollary}[theorem]{Corollary}
\theoremstyle{definition}
\newtheorem{example}[theorem]{Example}
\newtheorem{definition}[theorem]{Definition}
\newtheorem{remark}[theorem]{Remark}
\newcommand{\cR}{\mbox{${\cal R}$}}
\title{\textbf{A generalized Koszul property\\ for skew PBW extensions}}
\author{\\  H\'ector Su\'arez\footnote{Escuela de Matem\'aticas y Estad\'istica,
Universidad Pedag\'ogica y Tecnol\'ogica de Colombia, Tunja. } \footnote{Departamento de Matem\'aticas,  Universidad Nacional de Colombia, sede Bogot\'a. } \\Armando Reyes \footnote{Departamento de Matem\'aticas,
Universidad Nacional de Colombia - sede Bogot\'a.}}
\date{}
\begin{document}
\maketitle
\begin{abstract}
\noindent
Let $R$ be a commutative algebra. In this paper  we show that constant skew PBW extensions of a generalized Koszul algebra $R$ are also generalized Koszul. Let $A$ be a semi-commutative skew PBW extension of $R$ such that $A$ is $R$-augmented. We show also that if $R$ is augmented Koszul  then $A$ is $R$-augmented Koszul.
\bigskip

\noindent \textit{Key words and phrases.} Skew $PBW$ extensions, generalized  Koszul algebras, augmented algebras.

\bigskip

\noindent 2010 \textit{Mathematics Subject Classification.}
 16S37,  16W50,  16W70, 16S36, 13N10.
\bigskip

\end{abstract}
%%%%%%%%%%%%%%%%%%%%%%%%%%%%%%%%%%%%%%%%%%%%%%%%%%%%%%%%%%%%%%%%%%%%%%%%%%%%%

\section{Introduction}

Koszul algebras were first introduced by Priddy in  \cite{Priddy1970} under the name of homogeneous Koszul algebras and have since revealed important applications in algebraic geometry, Lie theory,
quantum groups, algebraic topology and combinatorics. The  structure and  history of Koszul algebras are  detailed
in \cite{Polishchuk}. Backelin and Fr\"oberg in \cite{Backelin} presented several equivalent definitions of Koszul algebras. Later they emerged some general notions of Koszul algebras (or Koszul rings). For example in \cite{BeilinsonGinzburgSoerge1996}, a \textit{Koszul ring} is a positively graded ring $A =\bigoplus_{j\geq 0}A_j,$ such that
 $A_0$ is semisimple and $A_0$ considered as a graded left $A$-module admits a graded projective resolution \[
\dotsb \to P^{n} \to \dotsb \to P^{1} \to P^{0} \twoheadrightarrow A_0,
\]
such that $P^i$ is generated by its degree $i$ component, i.e., $P^i = AP^i_i$. Many interesting algebras with properties
similar to a Koszul algebra do not satisfy the assumption that $A_0$ is semisimple. There do already exist several generalized Koszul theories where the degree 0 part
$A_0$ of a graded algebra $A$ is not required to be semisimple, see \cite{GreenReitenSolberg2002}, \cite{Li5}, and \cite{Woodcock1998}. Each
Koszul ring $A$ defined by Woodcock in \cite{Woodcock1998}  is supposed to satisfy that $A$ is both
a left projective $A_0$-module and a right projective $A_0$-module. This requirement is too strong. In \cite{Li5}, it was defined the notion of generalized Koszul modules
and Koszul algebras in a similar way to the classical case. Phan in \cite{Phan1} and \cite{Phan2} defined Koszul algebras for augmented algebras and $R$-augmented algebras.\\

Skew $PBW$ extensions or $\sigma$-PBW extensions were defined in 2011 by Gallego and Lezama \cite{LezamaGallego}. Several properties of these rings have been studied (see for example \cite{LezamaGallego}, \cite{Reyes2013}, \cite{Reyes2014}, \cite{LezamaReyes},  \cite{LezamaAcostaReyes2015}, \cite{Reyes2015}, \cite{Venegas2015}, \cite{ReyesSuarez2016a}, and others). In \cite{SuarezLezamaReyes2016} we study the Koszul property (in the direction of Priddy)  for skew PBW extensions over fields.  Since the skew PBW extensions are rings, our interest in this paper is to study the generalized Koszul property for skew extensions, according to the definitions given by Li and Phan in \cite{Li5} and \cite{Phan1}, respectively.
%%%%%%%%%%%%%%%%%
\section{Graduation of skew PBW extensions}
In this section we  will see some cases for which a skew  PBW extension of a graded algebra $R$ is also graded. Throughout this paper $\mathbb{K}$ is a field. We first  recall the definition and some properties of skew PBW extension introduced  in  \cite{LezamaGallego}, \cite{LezamaReyes} and \cite{SuarezLezamaReyes2016}. Note that the following definition extends Ore extensions and PBW extensions.
%%%%%
\begin{definition}[\cite{LezamaGallego}, Definition 1]\label{def.skewpbwextensions}
Let $R$ and $A$ be rings. We say that $A$ is a \textit{skew PBW
extension of} $R$ (also called a $\sigma$-PBW extension
of $R$) if the following conditions hold:
\begin{enumerate}
\item[\rm (i)]$R\subseteq A$;
\item[\rm (ii)]there exist elements $x_1,\dots ,x_n\in A$ such that $A$ is a left free $R$-module, with basis the basic elements
\begin{center}
${\rm Mon}(A):= \{x^{\alpha}=x_1^{\alpha_1}\cdots
x_n^{\alpha_n}\mid \alpha=(\alpha_1,\dots ,\alpha_n)\in
\mathbb{N}^n\}$.
\end{center}
In this case it is said also that $A$ is a left polynomial ring over $R$ with respect
to $\{x_1,\cdots, x_n\}$ and Mon(A) is the set of standard monomials of $A$. Moreover,
$x^0_1\cdots x^0_n := 1 \in {\rm Mon}(A)$.

\item[\rm (iii)]For each $1\leq i\leq n$ and any $r\in R\ \backslash\ \{0\}$, there exists an element $c_{i,r}\in R\ \backslash\ \{0\}$ such that
\begin{equation}\label{sigmadefinicion1}
x_ir-c_{i,r}x_i\in R.
\end{equation}
\item[\rm (iv)]For any elements $1\leq i,j\leq n$, there exists $c_{i,j}\in R\ \backslash\ \{0\}$ such that
\begin{equation}\label{sigmadefinicion2}
x_jx_i-c_{i,j}x_ix_j\in R+Rx_1+\cdots +Rx_n.
\end{equation}
Under these conditions we will write $A:=\sigma(R)\langle
x_1,\dots,x_n\rangle$.
\end{enumerate}
\end{definition}

We can  build new skew PBW extensions  from a given skew PBW extension.
\begin{proposition}\label{inocente}
 If $A$ is a skew PBW extension of a commutative ring $K$, and $B$ is a commutative $K$-algebra, then $B\otimes_K A$ is a skew PBW extension of $B\otimes_KK$, i.e., a skew PBW extension of $B$.
\begin{proof}
We need verify the four conditions of the Definition \ref{def.skewpbwextensions}.
\begin{enumerate}
\item[\rm (i)] Since $K\subseteq A$, it is clear that $B\otimes_K K \subseteq B\otimes_K A$.
 \item[\rm (ii)] We know that $B\otimes_K A$ is a $B$-algebra under the product $b'(b\otimes a):=b'b\otimes a$, and that  $B\otimes_K A$ is $B$-free with the same rank that $A$ as $K$-module (\cite{Rowen2008}, Remark 18.27), and since $B\cong B\otimes_K K$, we have that $B\otimes_K A$ is a free $B\otimes_K K$-module. We  also note  that
\[
{\rm Mon}(B\otimes_K A) = \{(1\otimes x_1)^{\alpha_1} \dotsb (1\otimes x_n)^{\alpha_n}\mid \alpha_i\in \mathbb{N}, 1\le i\le n\},
\]
where $B\otimes_K A$ is $B\otimes_K K$-free with the same rank that $A$ as $K$-module.
\item[\rm (iii)] Let $r\otimes k'$ be a nonzero element of $B\otimes_K K$ (which we identify with  $rk'$ since $B\cong B\otimes_K K$), we can see that for all element $1\otimes x_i$ in the basis, there exist an element $1\otimes c_{i,r}\in B\otimes_K K$ such that
\[
(1\otimes x_i)(r\otimes k') - (1\otimes c_{i,r})(1\otimes x_i) \in B\otimes_K K,
\]
where $c_{i,r}\in K$ satisfies $x_ir-c_{i,r}x_i \in K$, being as $A$ is a skew PBW extension of $K$.
\item[\rm (iv)] The condition
\begin{multline*}
(1\otimes x_i)(1\otimes x_j) - (1\otimes c_{i,j})(1\otimes x_j)(1\otimes x_i)\\
 \in B\otimes_K K + (B\otimes_K K)(1\otimes x_1) + \dotsb + (B\otimes_K K)(1\otimes x_n),
\end{multline*}
it follows from the  Definition \ref{def.skewpbwextensions}, condition (iv).
\end{enumerate}
\end{proof}
\end{proposition}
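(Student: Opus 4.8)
The plan is to verify directly the four conditions of Definition~\ref{def.skewpbwextensions} for the pair $(B\otimes_K A,\,B\otimes_K K)$; each condition will come out of transporting the structure of $A$ along the ring homomorphism $1\otimes(-)\colon A\to B\otimes_K A$. First I would record the two standing facts the argument rests on: (a) $B\otimes_K A$ is a $K$-algebra under the componentwise product $(b\otimes a)(b'\otimes a')=bb'\otimes aa'$, so that $1\otimes(-)$ and $b\mapsto b\otimes 1$ are ring homomorphisms; and (b) the canonical isomorphism $B\otimes_K K\cong B$, under which every element of $B\otimes_K K$ takes the form $b\otimes 1_A$.

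For (i) I would use that $A$ is a free left $K$-module having $1=x^0\in\mathrm{Mon}(A)$ among its basis elements, so $K$ is a direct summand of $A$ as a left $K$-module; then $B\otimes_K(-)$ keeps the inclusion $K\hookrightarrow A$ injective, and composing with $B\otimes_K K\cong B$ gives $B\otimes_K K\subseteq B\otimes_K A$. For (ii) I would tensor the free module $A=\bigoplus_{\alpha}Kx^{\alpha}$ with $B$ to see that $B\otimes_K A$ is free as a left $B$-module (equivalently, as a left $B\otimes_K K$-module) on $\{\,1\otimes x^{\alpha}\mid\alpha\in\mathbb{N}^n\,\}$; since $1\otimes(-)$ is multiplicative one has $1\otimes x^{\alpha}=(1\otimes x_1)^{\alpha_1}\cdots(1\otimes x_n)^{\alpha_n}$, so writing $y_i:=1\otimes x_i$ gives $\mathrm{Mon}(B\otimes_K A)=\{y^{\alpha}\mid\alpha\in\mathbb{N}^n\}$ with $y_1^0\cdots y_n^0=1\otimes 1$.

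For (iii) and (iv) I would push the defining identities of $A$ forward through $1\otimes(-)$. Condition (iii) should be immediate: every element of $B\otimes_K K$ has the form $b\otimes 1_A$, and such elements are central in $B\otimes_K A$ (componentwise product, $B$ commutative), so for nonzero $r\in B\otimes_K K$ and any $i$ one takes $c_{i,r}=r$ and obtains $y_ir-c_{i,r}y_i=0\in B\otimes_K K$. For (iv) I would apply $1\otimes(-)$ to $x_jx_i-c_{i,j}x_ix_j=r_0+r_1x_1+\dots+r_nx_n$ (with $c_{i,j}\in K\setminus\{0\}$ and $r_0,\dots,r_n\in K$); since $1\otimes(-)$ is a ring homomorphism carrying $K$ into $B\otimes_K K$, this gives
\[
y_jy_i-(1\otimes c_{i,j})\,y_iy_j=(1\otimes r_0)+(1\otimes r_1)y_1+\dots+(1\otimes r_n)y_n\ \in\ (B\otimes_K K)+\sum_{k=1}^{n}(B\otimes_K K)\,y_k,
\]
so $B\otimes_K A$ satisfies (iv) with $c_{i,j}$ replaced by $1\otimes c_{i,j}\in B\otimes_K K$.

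Almost all of this is routine bookkeeping with the identifications, and I do not expect a serious obstacle. The one point that genuinely needs attention is the non-vanishing demanded by (iv), namely that $1\otimes c_{i,j}\neq 0$ in $B\otimes_K K\cong B$, equivalently that $c_{i,j}$ has nonzero image under $K\to B$. This is automatic when $K\to B$ is injective (for instance when $B$ is faithfully flat over $K$) or when the $c_{i,j}$ are units in $K$, as in the bijective case, and otherwise it has to be read off from the explicit form of the $c_{i,j}$; note also that the hypothesis that $A$ is a $K$-algebra is what legitimizes the componentwise product of (a) used throughout, so I would state it at the outset.
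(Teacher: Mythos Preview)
Your approach coincides with the paper's: both verify the four axioms of Definition~\ref{def.skewpbwextensions} by transporting the relations of $A$ through $1\otimes(-)$. Your handling of condition~(iii) is in fact tidier than the paper's. The paper attempts to use the constants $c_{i,r}$ inherited from $A$, which is awkward since a general nonzero element of $B\otimes_K K$ does not come from $K$ and one must also argue that $1\otimes c_{i,r}\neq 0$; you instead observe that $B\otimes_K K\cong B$ embeds centrally in $B\otimes_K A$ and take $c_{i,r}=r$, which dispatches both the relation and the nonvanishing at once. The issue you flag in~(iv), that $1\otimes c_{i,j}$ may vanish in $B$ when the structure map $K\to B$ is not injective, is real and is simply not addressed in the paper's proof; your proposed remedies (injectivity of $K\to B$, or invertibility of the $c_{i,j}$ as in the bijective case) are the natural extra hypotheses. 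Your closing remark that one needs $K$ central in $A$ for the componentwise product on $B\otimes_K A$ to make sense is also a point the paper uses without comment.
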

%%%%%%%%%%%%%%%%%%
\begin{example}
If $A=\sigma(K)\langle x_1,\dotsc, x_n\rangle$, and $K[y]:=K[y_1,\dotsc, y_m]$, where $y_1, y_2,\dotsc, y_m$ are new variables, then
\[
K[y]\otimes_K A\cong \sigma (K[y]\otimes_K K)\langle 1\otimes x_1,\dotsc, 1\otimes x_n\rangle\cong \sigma(K[y])\langle x_1,\dotsc, x_n\rangle.
\]
\end{example}
%%%%%%%%%%%%%%%%%%%%%%%%%%%%%%
%%%%%%%%%%%%%%%%%%%%%%%%%%%%%%
The notation $\sigma(R)\langle x_1,\dots,x_n\rangle$ and the name of the skew PBW extensions is due to the following proposition.

\begin{proposition}[\cite{LezamaGallego}, Proposition 3]\label{sigmadefinition}
Let $A$ be a skew PBW extension of $R$. For each $1\leq i\leq
n$, there exist an injective endomorphism $\sigma_i:R\rightarrow
R$ and a $\sigma_i$-derivation $\delta_i:R\rightarrow R$ such that
$x_ir=\sigma_i(r)x_i+\delta_i(r)$, for every  $r \in R$.
\end{proposition}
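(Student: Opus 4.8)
The plan is to extract the maps $\sigma_i$ and $\delta_i$ directly from condition (iii) of Definition \ref{def.skewpbwextensions}, and then to verify the required algebraic identities by exploiting the uniqueness of coordinates of an element of $A$ with respect to the basis ${\rm Mon}(A)$. Concretely, I would fix $i$ and, for each $r\in R\setminus\{0\}$, set $\sigma_i(r):=c_{i,r}$ (the element provided by \eqref{sigmadefinicion1}) and $\delta_i(r):=x_ir-c_{i,r}x_i\in R$, while putting $\sigma_i(0):=0$ and $\delta_i(0):=0$. By construction one then has $x_ir=\sigma_i(r)x_i+\delta_i(r)$ for every $r\in R$.

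The crucial observation, which makes the rest routine, is that since $A$ is a free left $R$-module with basis ${\rm Mon}(A)$, and both $1$ and $x_i$ lie in ${\rm Mon}(A)$, every element of $R+Rx_i$ is written \emph{uniquely} in the form $a+bx_i$ with $a,b\in R$. Hence $\sigma_i(r)$ and $\delta_i(r)$ are the unique elements of $R$ satisfying the displayed identity, and we may legitimately "compare coefficients" of $1$ and $x_i$.

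From this I would read off the structural properties. Applying the identity to $r+s$ and using $x_i(r+s)=x_ir+x_is$ gives, by uniqueness, $\sigma_i(r+s)=\sigma_i(r)+\sigma_i(s)$ and $\delta_i(r+s)=\delta_i(r)+\delta_i(s)$; applying it to $1$ gives $\sigma_i(1)=1$ and $\delta_i(1)=0$. For products I would compute
\[
x_i(rs)=(x_ir)s=\bigl(\sigma_i(r)x_i+\delta_i(r)\bigr)s=\sigma_i(r)(x_is)+\delta_i(r)s=\sigma_i(r)\sigma_i(s)x_i+\bigl(\sigma_i(r)\delta_i(s)+\delta_i(r)s\bigr),
\]
and uniqueness then yields $\sigma_i(rs)=\sigma_i(r)\sigma_i(s)$ together with $\delta_i(rs)=\sigma_i(r)\delta_i(s)+\delta_i(r)s$; that is, $\sigma_i$ is a ring endomorphism of $R$ and $\delta_i$ is a $\sigma_i$-derivation. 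Finally, injectivity of $\sigma_i$ is immediate from condition (iii), which forces $c_{i,r}\neq 0$ whenever $r\neq 0$, so $\sigma_i(r)=0$ implies $r=0$.

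I do not expect a genuine obstacle here; the only points requiring care are the bookkeeping of the zero element (so that $\sigma_i$ and $\delta_i$ are defined on all of $R$ and the additivity identities hold without case distinctions) and making fully explicit that it is the freeness of $A$ over $R$ that licenses the coefficient comparison at every step.
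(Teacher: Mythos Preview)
Your argument is correct and is precisely the standard proof of this fact. Note, however, that the paper does not actually supply its own proof of this proposition: it is quoted verbatim from \cite{LezamaGallego}, Proposition~3, and stated without proof. So there is nothing in the paper to compare against, but what you have written is exactly the argument one finds in the cited source---define $\sigma_i$ and $\delta_i$ via the unique $R$-coordinates of $x_ir$ in the free basis ${\rm Mon}(A)$, then read off additivity, multiplicativity, the derivation rule, and injectivity by coefficient comparison.
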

%%%%%%%%%%%%%%%%
There are some important sub-classes of skew PBW extensions (see for example \cite{LezamaGallego}, \cite{LezamaAcostaReyes2015} and \cite{SuarezLezamaReyes2016}).
%%%%%%%%%%%%%%%
\begin{definition}\label{sigmapbwderivationtype}
Let $A$ be a skew PBW extension of $R$, $\Sigma:=\{\sigma_1,\dotsc, \sigma_n\}$ and $\Delta:=\{\delta_1,\dotsc, \delta_n\}$, where $\sigma_i$ and $\delta_i$ ($1\leq i\leq n$) are as in  Proposition \ref{sigmadefinition}.
\begin{enumerate}
\item[\rm (a)]\label{def.constant} Any element $r$ of $R$ such that $\sigma_i(r)=r$ and $\delta_i(r)=0$, for all $1\leq i\leq n$, will be called a {\em constant}. $A$ is called \emph{constant} if every element of $R$ is constant.
\item[\rm (b)]  $A$ is called {\em pre-commutative} if the conditions  {\rm(}iv{\rm)} in Definition
\ref{def.skewpbwextensions} are replaced by:\\
For any $1\leq i,j\leq n$, there exists $c_{i,j}\in R\ \backslash\ \{0\}$ such that
$x_jx_i-c_{i,j}x_ix_j\in Rx_1+\cdots +Rx_n$.
\item[\rm (c)]\label{def.quasicom}  $A$ is called \textit{quasi-commutative} if the conditions
{\rm(}iii{\rm)} and {\rm(}iv{\rm)} in Definition
\ref{def.skewpbwextensions} are replaced by \begin{enumerate}
\item[\rm (iii')] for each $1\leq i\leq n$ and all $r\in R\ \backslash\ \{0\}$, there exists $c_{i,r}\in R\ \backslash\ \{0\}$ such that
$x_ir=c_{i,r}x_i$;
\item[\rm (iv')]for any $1\leq i,j\leq n$, there exists $c_{i,j}\in R\ \backslash\ \{0\}$ such that $x_jx_i=c_{i,j}x_ix_j$.
\end{enumerate}
\item[\rm (d)] $A$ is called \textit{bijective} if $\sigma_i$ is bijective for each $\sigma_i\in \Sigma$, and $c_{i,j}$ is invertible,  for any $1\leq
i<j\leq n$.
\item[\rm (e)]  If $\sigma_i={\rm id}_R$, for every $\sigma_i\in \Sigma$, we say that $A$ is a skew PBW extension of {\em derivation type}.
\item[\rm (f)]  If $\delta_i = 0$, for every $\delta_i\in \Delta$, we say that $A$ is a skew PBW extension of {\em endomorphism type}.
\item[\rm (g)]  $A$ is called {\em semi-commutative} if $A$ is quasi-commutative and constant.
\end{enumerate}
\end{definition}
Specific examples of the above classes of skew PBW extensions can be consulted in \cite{SuarezLezamaReyes2016}.\\

Quasi-commutative skew PBW extensions can be obtain from a given skew PBW extension of a ring $R$.

\begin{proposition}[\cite{LezamaReyes},  Proposition 2.1]\label{remarkAsigma}  Let $A$ be a skew PBW extension of $R$. Then, there exists a quasicommutative
skew PBW extension $A^{\sigma}$ of $R$ in $n$ variables $z_1,\dotsc, z_n$ defined by the relations $z_ir=c_{i,r}z_i$, $z_jz_i=c_{i,j}z_iz_j$, for $1\le i\le n$, where $c_{i,r}, c_{i,j}$ are the same constants that define $A$. Moreover, if $A$ is bijective then $A^{\sigma}$ is also bijective.
\end{proposition}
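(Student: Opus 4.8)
The plan is to produce $A^{\sigma}$ as the associated graded ring of $A$ for the filtration by total degree in $x_1,\dots,x_n$. For $m\ge 0$ put $|\alpha|:=\alpha_1+\cdots+\alpha_n$ and $F_m:=\bigoplus_{|\alpha|\le m}Rx^{\alpha}$ (with $F_{-1}:=0$). Since $A$ is left $R$-free on $\mathrm{Mon}(A)$, this is an ascending chain of left $R$-submodules with $F_0=R$, $\bigcup_{m}F_m=A$, and each quotient $F_m/F_{m-1}$ left $R$-free on the classes of the $x^{\alpha}$ with $|\alpha|=m$. The first thing to check is that $F_{\bullet}$ is a ring filtration, i.e. $F_mF_k\subseteq F_{m+k}$.

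For that I would establish the straightening formula
\begin{equation*}
x^{\alpha}x^{\beta}=c_{\alpha,\beta}\,x^{\alpha+\beta}+p_{\alpha,\beta},\qquad c_{\alpha,\beta}\in R,\quad p_{\alpha,\beta}\in F_{|\alpha|+|\beta|-1},
\end{equation*}
by induction on $|\alpha|+|\beta|$: rewriting $x_1^{\alpha_1}\cdots x_n^{\alpha_n}x_1^{\beta_1}\cdots x_n^{\beta_n}$ in standard form amounts to repeatedly moving a variable $x_i$ to the left past some $x_j$ with $j>i$, and each transposition $x_jx_i\mapsto c_{i,j}x_ix_j+(\text{element of }R+Rx_1+\cdots+Rx_n)$ together with each commutation $x_ir\mapsto\sigma_i(r)x_i+\delta_i(r)$ (Proposition \ref{sigmadefinition}) only modifies the coefficient of the top monomial $x^{\alpha+\beta}$ and contributes correction terms of strictly smaller total degree. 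Hence $F_mF_k\subseteq F_{m+k}$, and $A^{\sigma}:=\mathrm{gr}(A)=\bigoplus_{m\ge 0}F_m/F_{m-1}$ is an $\mathbb{N}$-graded ring which, as a left $R$-module, is free on the set $\{z^{\alpha}\}$ of classes of the $x^{\alpha}$. In particular $R=F_0$ embeds in $A^{\sigma}$, and writing $z_i$ for the class of $x_i$ in $F_1/F_0$ one gets $z^{\alpha}=z_1^{\alpha_1}\cdots z_n^{\alpha_n}$, so $\mathrm{Mon}(A^{\sigma})=\{z^{\alpha}\}$ and conditions (i), (ii) of Definition \ref{def.skewpbwextensions} hold.

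Next I would read off the relations. From $x_ir=\sigma_i(r)x_i+\delta_i(r)$ with $\delta_i(r)\in R=F_0$, passing to $F_1/F_0$ gives $z_ir=\sigma_i(r)z_i=c_{i,r}z_i$; and from $x_jx_i-c_{i,j}x_ix_j\in R+Rx_1+\cdots+Rx_n\subseteq F_1$, passing to $F_2/F_1$ gives $z_jz_i=c_{i,j}z_iz_j$. Since $c_{i,r}=\sigma_i(r)\neq 0$ for $r\neq 0$ (as $\sigma_i$ is injective) and $c_{i,j}\neq 0$ by definition of $A$, these are exactly conditions (iii$'$) and (iv$'$); thus $A^{\sigma}$ is a quasi-commutative skew PBW extension of $R$ with the stated defining relations and the \emph{same} constants $c_{i,r},c_{i,j}$ as $A$. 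For the last assertion, the endomorphism attached to $z_i$ in $A^{\sigma}$ is again $\sigma_i$ and the constants $c_{i,j}$ are unchanged, so if $A$ is bijective (each $\sigma_i$ bijective and each $c_{i,j}$, $i<j$, invertible) then $A^{\sigma}$ is bijective as well.

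The main obstacle is the straightening formula: it is really the assertion that total degree behaves well on standard monomials, and its proof is the same bookkeeping induction — driven by (iii) and (iv) of Definition \ref{def.skewpbwextensions} — that underlies the fact that $A$ is left $R$-free on $\mathrm{Mon}(A)$ in the first place. One could alternatively build $A^{\sigma}$ from scratch as the left free $R$-module on symbols $z^{\alpha}$ with the prescribed straightening rule and prove associativity via the Diamond Lemma, but that repeats the overlap computations already implicit in the existence of $A$; routing through $\mathrm{gr}(A)$ sidesteps them. The only other point to watch is that the reused data is legitimate — $\sigma_i$ stays injective and $c_{i,r},c_{i,j}$ stay nonzero — which is immediate, since these are literally part of the structure of $A$.
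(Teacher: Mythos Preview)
Your argument is correct. Note, however, that the paper does not supply its own proof of this proposition: it is quoted verbatim from \cite{LezamaReyes}, Proposition~2.1, so there is no in-paper argument to compare against directly.

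That said, your route differs from the way the two cited results are organized. In \cite{LezamaReyes} (and as reproduced here), Proposition~\ref{remarkAsigma} asserts the \emph{existence} of $A^{\sigma}$ as a quasi-commutative skew PBW extension with the prescribed constants, while Proposition~\ref{teo.Gr(A)} separately asserts that $\mathrm{Gr}(A)\cong A^{\sigma}$. This separation suggests that $A^{\sigma}$ is first built independently of $A$ --- e.g.\ as an iterated Ore extension $R[z_1;\sigma_1]\cdots[z_n;\sigma_n]$, or by a direct Diamond-Lemma verification on the free left $R$-module on the symbols $z^{\alpha}$ --- and only afterwards identified with the associated graded ring. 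You instead \emph{define} $A^{\sigma}:=\mathrm{gr}(A)$ and read the relations off the quotients $F_1/F_0$ and $F_2/F_1$, thereby proving both propositions at once. Your approach is more economical (associativity of $A^{\sigma}$ is inherited from $A$, so no overlap computations are needed); the price is that $A^{\sigma}$ no longer exists as a stand-alone object independent of $A$, which is harmless for the uses made of it in this paper.
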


The next proposition  computes the graduation of a general skew
PBW extension of a ring $R$.

\begin{proposition}[\cite{LezamaReyes}, Theorem 2.2]\label{teo.Gr(A)}
Let $A$ be an arbitrary skew PBW extension of $R$. Then, $A$ is
a filtered ring with increasing filtration given by
\begin{equation}\label{eq1.3.1a}
F_m(A):=\begin{cases} R & {\rm if}\ \ m=0\\ \{f\in A\mid {\rm deg}(f)\le m\} & {\rm if}\ \ m\ge 1
\end{cases}
\end{equation}
and the corresponding graded ring $Gr(A)$ is isomorphic to $A^{\sigma}$, where $A^{\sigma}$ is as in Proposition \ref{teo.Gr(A)}.
\end{proposition}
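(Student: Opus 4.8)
The plan is to use the degree filtration throughout. For $0\ne f=\sum_{\alpha}r_\alpha x^\alpha\in A$ (a finite sum with $r_\alpha\in R$) set $\deg(f):=\max\{|\alpha|:r_\alpha\ne 0\}$, where $|\alpha|=\alpha_1+\cdots+\alpha_n$; this is well defined because ${\rm Mon}(A)$ is a left $R$-basis of $A$, and one has $F_m(A)=\bigoplus_{|\alpha|\le m}Rx^\alpha$ for every $m\ge 0$ (in particular $F_0(A)=R$). It is then immediate that $F_m(A)\subseteq F_{m+1}(A)$, that $\bigcup_{m\ge 0}F_m(A)=A$, that $1\in F_0(A)$, and that each $F_m(A)$ is a left $R$-submodule of $A$. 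The only substantive point is multiplicativity, $F_m(A)F_{m'}(A)\subseteq F_{m+m'}(A)$; I would deduce it from the two degree estimates $\deg(x^\alpha r)\le|\alpha|$ for $r\in R$ and $\deg(x^\alpha x^\beta)\le|\alpha|+|\beta|$, both of which follow by a routine induction from $x_ir=\sigma_i(r)x_i+\delta_i(r)$ (Proposition \ref{sigmadefinition}) and condition (iv) of Definition \ref{def.skewpbwextensions}: expanding $\bigl(\sum r_\alpha x^\alpha\bigr)\bigl(\sum s_\beta x^\beta\bigr)$ and using the first estimate to move the $s_\beta$'s to the left of $x^\alpha$ and the second to bound $x^\alpha x^\beta$, every resulting term has degree $\le m+m'$. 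This establishes that $A$ is a filtered ring with the stated filtration.

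The next step is the technical core: one sharpens these inductions to the monomial product formula for skew PBW extensions (see \cite{LezamaGallego}), namely that for all $\alpha,\beta\in\mathbb{N}^n$ there is $c_{\alpha,\beta}\in R\setminus\{0\}$ with
\[
x^\alpha x^\beta=c_{\alpha,\beta}\,x^{\alpha+\beta}+p_{\alpha,\beta},\qquad p_{\alpha,\beta}=0\ \ \text{or}\ \ \deg(p_{\alpha,\beta})<|\alpha|+|\beta|,
\]
and likewise $x^\alpha r=\sigma^\alpha(r)\,x^\alpha+q_{\alpha,r}$ with $q_{\alpha,r}=0$ or $\deg(q_{\alpha,r})<|\alpha|$, where $\sigma^\alpha:=\sigma_1^{\alpha_1}\cdots\sigma_n^{\alpha_n}$ (so $\sigma^\alpha(r)\ne 0$ when $r\ne 0$, the $\sigma_i$ being injective). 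The key observation is that the leading coefficients $c_{\alpha,\beta}$ arising here are exactly the structure constants of the quasi-commutative extension $A^\sigma=\sigma(R)\langle z_1,\dots,z_n\rangle$ of Proposition \ref{remarkAsigma}: computing the leading term of $x^\alpha x^\beta$ consists of repeatedly applying the commutation rules (iii), (iv) and discarding everything of lower degree, which is the same rewriting that produces $z^\alpha z^\beta=c_{\alpha,\beta}z^{\alpha+\beta}$ and $z^\alpha r=\sigma^\alpha(r)z^\alpha$ in $A^\sigma$.

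Finally, form $Gr(A)=\bigoplus_{m\ge 0}F_m(A)/F_{m-1}(A)$ with $F_{-1}(A):=0$, a positively graded ring with degree-zero component $R$. Since $F_m(A)=\bigoplus_{|\alpha|\le m}Rx^\alpha$, the component $F_m(A)/F_{m-1}(A)$ is free as a left $R$-module on the classes $\overline{x^\alpha}$ with $|\alpha|=m$; hence the left $R$-linear map $\varphi\colon A^\sigma\to Gr(A)$ determined by $\varphi(z^\alpha)=\overline{x^\alpha}$ is a degree-preserving bijection, and $\varphi|_R={\rm id}_R$. It remains to check that $\varphi$ is multiplicative, and by left $R$-linearity this reduces to $\varphi\bigl((rz^\alpha)(sz^\beta)\bigr)=\varphi(rz^\alpha)\,\varphi(sz^\beta)$ for $r,s\in R$. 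By the formulas above, the left-hand side is $\varphi\bigl(r\,\sigma^\alpha(s)\,c_{\alpha,\beta}\,z^{\alpha+\beta}\bigr)=r\,\sigma^\alpha(s)\,c_{\alpha,\beta}\,\overline{x^{\alpha+\beta}}$, whereas on the right-hand side $\overline{x^\alpha}\cdot s=\sigma^\alpha(s)\,\overline{x^\alpha}$ and $\overline{x^\alpha}\cdot\overline{x^\beta}=c_{\alpha,\beta}\,\overline{x^{\alpha+\beta}}$ in $Gr(A)$ (the lower-degree remainders $q_{\alpha,s}$ and $p_{\alpha,\beta}$ vanish in the associated graded), so both sides coincide. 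Therefore $\varphi$ is an isomorphism of graded rings restricting to the identity on $R$, i.e. $Gr(A)\cong A^\sigma$. (Equivalently one can define $\varphi$ on generators by $z_i\mapsto\overline{x_i}$ and note that these classes satisfy in $Gr(A)$ the defining relations $z_ir=c_{i,r}z_i$, $z_jz_i=c_{i,j}z_iz_j$ of $A^\sigma$, which is immediate from Proposition \ref{sigmadefinition} and condition (iv).)

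The genuine obstacle is the middle step: establishing the monomial product formula with a nonzero leading coefficient and, above all, identifying those coefficients $c_{\alpha,\beta}$ (and the twists $\sigma^\alpha$) with the data defining $A^\sigma$ — equivalently, checking $\varphi(z^\alpha)\varphi(z^\beta)=\varphi(z^\alpha z^\beta)$, which unwinds precisely to $\overline{x^\alpha}\cdot\overline{x^\beta}=c_{\alpha,\beta}\,\overline{x^{\alpha+\beta}}$ in $Gr(A)$. Once that leading-term analysis is in place, the remaining verifications — the filtration axioms and the well-definedness, bijectivity and multiplicativity of $\varphi$ — are routine.
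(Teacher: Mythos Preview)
The paper does not supply its own proof of this proposition: it is quoted verbatim from \cite{LezamaReyes}, Theorem~2.2, and no argument is given here. So there is nothing in the present paper to compare against.

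That said, your proof is correct and is essentially the argument one finds in \cite{LezamaReyes} (and, for the monomial product formula, in \cite{LezamaGallego}). You set up the degree filtration, reduce multiplicativity to the two estimates $\deg(x^\alpha r)\le|\alpha|$ and $\deg(x^\alpha x^\beta)\le|\alpha|+|\beta|$, sharpen these to the leading-term formulas $x^\alpha r=\sigma^\alpha(r)x^\alpha+(\text{lower})$ and $x^\alpha x^\beta=c_{\alpha,\beta}x^{\alpha+\beta}+(\text{lower})$, and then define $\varphi\colon A^\sigma\to Gr(A)$ by $z^\alpha\mapsto\overline{x^\alpha}$ and check multiplicativity on monomials. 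The point you flag as the ``genuine obstacle'' --- that the leading coefficients $c_{\alpha,\beta}$ and the twists $\sigma^\alpha$ computed in $A$ coincide with the structure constants of $A^\sigma$ --- is exactly the content of the inductive monomial computations in \cite{LezamaGallego}, and your justification (same rewriting rules, lower-order terms discarded) is the right one. Nothing is missing.
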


In general, skew PBW extensions do not admit nontrivial graduation. In this section, we will show that if $R$ is a  graded commutative ring, then every constant skew PBW extension of $R$ is also a graded ring. Graduation extends from  $R$ to $A$.\\

%%%%%%%%%%

For the following proposition we assume that $K$ is a commutative ring and $G$ is a group. A ring $B$ is called a graded $K$-algebra, if $B$ is a $G$-graded ring, and $B$ is a $K$-algebra with $a\cdot 1 = 1\cdot a\in B_{e}$, for all $a\in K$, where $e$ denotes the identity  element of the group.

\begin{proposition}\label{NastaVOEjemplo1.3.11}
If $A$ is a constant skew PBW extension of a commutative $G$-graded ring $R$, then $A$ is also a $G$-graded ring.
\begin{proof}
 From \cite{NastaVO}, Example 1.3.11, we have that if $B$ is a $G$-graded $K$-algebra, and $S$ is a $K$-algebra, then the tensor product $B\otimes_K S$ is a $K$- algebra with the product $(r\otimes s)(r'\otimes s'):=rr'\otimes ss'$, for $r, r' \in B$, and, $s, s'\in S$. Also,

\begin{equation}\label{grad.prod}
B\otimes_K S = \sum_{g\in G} B_g\otimes_K S,
\end{equation}

thus, we obtain a $G$-graduation of $B\otimes_K S$. If $A$ is a constant skew PBW extension of a  $G$-graded commutative ring $R$, the $R$-isomorphism $R\otimes_R A\cong A$ allows to consider  $A$ as a $G$-graded $R$-algebra.
\end{proof}
\end{proposition}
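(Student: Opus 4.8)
The plan is to turn the constancy of $A$ into the statement that $R$ sits in the centre of $A$, and then to realise $A$ as a base change over $R_e$ of a skew PBW extension of $R_e$, so that the $G$-grading of $R$ propagates to $A$ through the tensor-product construction of \cite{NastaVO}, Example 1.3.11.

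First I would make the centrality precise. By Proposition \ref{sigmadefinition} one has $x_ir=\sigma_i(r)x_i+\delta_i(r)$ for every $r\in R$ and every $1\le i\le n$; constancy means $\sigma_i=\mathrm{id}_R$ and $\delta_i=0$, hence $x_ir=rx_i$ for all $i$ and all $r\in R$. Since $R$ is commutative and $A$ is generated as an $R$-algebra by the $x_i$, this gives $R\subseteq Z(A)$; in particular $A$ is an $R$-algebra and the multiplication map $R\otimes_R A\to A$ is an isomorphism of $R$-algebras. Set $K:=R_e$. Because $R_e\subseteq Z(R)=R$, the ring $R$ is a $G$-graded $K$-algebra, and the construction of \cite{NastaVO}, Example 1.3.11, shows that for any $K$-algebra $S$ the tensor product $R\otimes_K S$ is $G$-graded with homogeneous components $R_g\otimes_K S$: indeed $\bigoplus_{g\in G}(R_g\otimes_K S)=(\bigoplus_{g\in G}R_g)\otimes_K S=R\otimes_K S$, and $(R_g\otimes_K S)(R_h\otimes_K S)\subseteq R_gR_h\otimes_K S\subseteq R_{gh}\otimes_K S$.

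To use this I must present the given $A$ in the form $R\otimes_K S$. Let $S\subseteq A$ be the $K$-subalgebra generated by $x_1,\dots,x_n$. The point is to show that $S=\bigoplus_{\alpha\in\mathbb{N}^n}K\,x^{\alpha}$ and that multiplication induces a ring isomorphism $R\otimes_K S\cong A$; equivalently, that $S=\sigma(R_e)\langle x_1,\dots,x_n\rangle$ is a skew PBW extension of $R_e$ and $A=R\otimes_{R_e}S$ is its base change along $R_e\hookrightarrow R$, in the sense of Proposition \ref{inocente} and the Example following it (surjectivity of the map is clear because $R$ and the $x_i$ generate $A$; it is a ring homomorphism because $R$ is central; and injectivity follows from $R\otimes_K S=\bigoplus_\alpha R\,x^{\alpha}=A$ by the $R$-freeness of $A$ on $\mathrm{Mon}(A)$). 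Granting this, the $G$-grading of $R\otimes_K S$ transports through the isomorphism to a $G$-grading of $A$ with $A_g=\bigoplus_{\alpha\in\mathbb{N}^n}R_g\,x^{\alpha}$; it restricts to the given grading of $R$, places every $x_i$ in degree $e$, and for $R=\mathbb{K}[y_1,\dots,y_m]$ reproduces the $\mathbb{Z}$-grading of $\sigma(\mathbb{K}[y])\langle x_1,\dots,x_n\rangle$ with $\deg y_j=1$ and $\deg x_i=0$, in accordance with the Example above. The step I expect to be the main obstacle — the only one requiring more than formal manipulation — is the identity $S=\bigoplus_\alpha Kx^{\alpha}$, i.e.\ closure of $S$ under multiplication: unwinding it, one has to check that the constants $c_{i,j}$ and the coefficients on the right-hand side of relation (iv) of Definition \ref{def.skewpbwextensions} all lie in $R_e$ (relation (iii) contributing nothing, since the extension is constant), for only then does the reduction of a product $x^{\alpha}x^{\beta}$ to standard form keep its coefficients in $R_e$, which is exactly what makes $A_gA_h\subseteq A_{gh}$ hold.
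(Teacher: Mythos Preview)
Your route diverges from the paper's. The paper does not pass to $K=R_e$ or carve out an $R_e$-subalgebra $S$; it applies the tensor-product grading of \cite{NastaVO} with $K=R$, $B=R$ and $S=A$ directly, invoking the canonical isomorphism $R\otimes_R A\cong A$ to transport the decomposition $\sum_{g\in G} R_g\otimes_R A$ onto $A$. No check on where the $c_{i,j}$ lie is ever performed.

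The obstacle you isolate at the end is a genuine gap in your argument, not a technicality: nothing in the definition of a constant skew PBW extension forces the constants $c_{i,j}$, or the coefficients on the right of relation (\ref{sigmadefinicion2}), to lie in $R_e$. Without that, the $K$-subalgebra $S$ you define need not equal $\bigoplus_\alpha Kx^{\alpha}$, the multiplication map $R\otimes_{R_e}S\to A$ need not be an isomorphism, and the candidate grading $A_g=\bigoplus_\alpha R_g\,x^{\alpha}$ need not satisfy $A_eA_e\subseteq A_e$ (compute $x_jx_i$ when $c_{i,j}\notin R_e$). Your approach therefore cannot be completed under the stated hypotheses alone. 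It is worth noting that the paper's shortcut with $K=R$ does not really evade the difficulty either: for the cited decomposition $\sum_g R_g\otimes_R A$ to be a direct sum of $K$-modules one needs each $R_g$ to be an $R$-submodule of $R$, i.e.\ an ideal, which is again not assumed. The homogeneity obstruction you uncovered is thus intrinsic to the statement as written, and some additional hypothesis on the structure constants (or on $R$) is implicitly in play.
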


Let $A$ be a skew PBW extension of $R$. Each element $f\in A\ \backslash\ \{0\}$ has a unique representation as $f=c_1X_1+\cdots+c_tX_t$, with $c_i\in R\ \backslash\ \{0\}$ and $X_i= x^{\alpha}= x_1^{\alpha_1}\cdots
x_n^{\alpha_n}\in {\rm Mon}(A)$ for $1\leq i\leq t$ (Remark 2, \cite{LezamaGallego}). For $\alpha=(\alpha_1,\dots,\alpha_n)\in \mathbb{N}^n$,
$\sigma^{\alpha}:=\sigma_1^{\alpha_1}\cdots \sigma_n^{\alpha_n}$,
$|\alpha|:=\alpha_1+\cdots+\alpha_n$.

\section{Generalized Koszul property}

Let $A=A_0\bigoplus A_1 \bigoplus \cdots$ be a graded algebra. Denote $J =\bigoplus_{i\geq 0}A_i$, which is a two-sided ideal of $A$. We say that $A$ is \emph{locally finite}  if $\dim_{\mathbb{K}} A_i<\infty$, for all  $i\geq 0$. $A$ is \emph{ge\-ne\-ra\-ted  in degrees 0 and 1} if  $A_1\cdot A_i = A_{i+1}$, for all $i\geq 0$. An $A$-module $M = \bigoplus_{i\in \mathbb{Z}}M_i$ such that $M_i=0$, for $i \ll 0$ is called \emph{graded} if  $A_i\cdot M_j\subseteq M_{i+j}$.  $M$ is \emph{locally finite} if $\dim_{\mathbb{K}}M_i<\infty$, for all $i\in \mathbb{Z}$. In this paper all graded modules are supposed to be locally finite.  A module $M$ such that there exists $l$ satisfying $M = M_l$ is said to be \emph{concentrated  in degree $l$}. We identify $A_0$ with the quotient module $A/J$ and view it as a graded $A$-module concentrated in degree 0. We say that $M$ is generated in degree $s$ if $M = A\cdot M_s$. If $M$ is generated in degree $s$, then $\bigoplus_{i\geq s+1}M_i$ is a graded submodule of $M$, and $M_s \cong M/\bigoplus_{i\geq s+1}M_i$ as vector spaces. We then view $M_s$ as an $A$-module by identifying it with this quotient module. $M$ is generated in degree $s$ if and only if $JM \cong\bigoplus_{i\geq s+1}M_i$, which is equivalent to $J^lM \cong\bigoplus_{i\geq s+l}M_i$, for all $l \geq 1$ (see  \cite{Li5}).\\

Let $A=A_0\bigoplus A_1 \bigoplus \dotsb$ be a locally finite graded algebra. A PBW
\emph{deformation} of $A$ is a filtered algebra $U$ with an ascending filtration $0 = F_{-1}(U) \subseteq F_0(U) \subseteq F_1(U) \subseteq\cdots$ such that the associated graded algebra $Gr(U)$ is isomorphic to $A$.

\begin{proposition}
Let $A$ be a skew PBW extension of $\mathbb{K}$. Then $A$ is a PBW deformation of $A^{\sigma}$.
\end{proposition}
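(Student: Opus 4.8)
The plan is to deduce the statement directly from Proposition~\ref{teo.Gr(A)}, after first checking that the candidate ``base'' algebra $A^{\sigma}$ satisfies the standing hypotheses attached to the notion of PBW deformation, namely that it is a locally finite graded algebra generated in degrees $0$ and $1$. Recall from Proposition~\ref{remarkAsigma} that, for $R=\mathbb{K}$, the algebra $A^{\sigma}=\sigma(\mathbb{K})\langle z_1,\dots,z_n\rangle$ is the quasi-commutative skew PBW extension of $\mathbb{K}$ with defining relations $z_ir=c_{i,r}z_i$ and $z_jz_i=c_{i,j}z_iz_j$, where $c_{i,r},c_{i,j}\in\mathbb{K}\setminus\{0\}$, and that as a left $\mathbb{K}$-module $A^{\sigma}$ is free on $\mathrm{Mon}(A^{\sigma})=\{z^{\alpha}\mid\alpha\in\mathbb{N}^n\}$.

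First I would equip $A^{\sigma}$ with the grading by total degree: set $(A^{\sigma})_m:=\bigoplus_{|\alpha|=m}\mathbb{K}z^{\alpha}$. Since every defining relation ($z_ir=c_{i,r}z_i$ with $c_{i,r}\in\mathbb{K}$, and $z_jz_i=c_{i,j}z_iz_j$ with $c_{i,j}\in\mathbb{K}$) relates elements of the same total degree, the product of $A^{\sigma}$ satisfies $(A^{\sigma})_i(A^{\sigma})_j\subseteq(A^{\sigma})_{i+j}$, so $A^{\sigma}=\bigoplus_{m\ge 0}(A^{\sigma})_m$ is a graded algebra with $(A^{\sigma})_0=\mathbb{K}$. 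It is locally finite, since $\dim_{\mathbb{K}}(A^{\sigma})_m=\#\{\alpha\in\mathbb{N}^n\mid|\alpha|=m\}=\binom{m+n-1}{n-1}<\infty$. Moreover, any standard monomial $z^{\alpha}$ of degree $m+1$ factors as $z_i\cdot z^{\alpha-e_i}$ for any index $i$ with $\alpha_i>0$, with $z_i\in(A^{\sigma})_1$ and $z^{\alpha-e_i}\in(A^{\sigma})_m$; hence $(A^{\sigma})_1\cdot(A^{\sigma})_m=(A^{\sigma})_{m+1}$ for all $m\ge 0$, and $A^{\sigma}$ is generated in degrees $0$ and $1$, exactly as required of the base of a PBW deformation.

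Next I would invoke Proposition~\ref{teo.Gr(A)}: $A$ is a filtered ring with the ascending filtration $\{F_m(A)\}_{m\ge 0}$ of \eqref{eq1.3.1a} (note $F_0(A)=\mathbb{K}$, so this is a filtration by $\mathbb{K}$-subspaces compatible with the product), and $Gr(A)\cong A^{\sigma}$. Adjoining $F_{-1}(A):=0$ yields an ascending algebra filtration $0=F_{-1}(A)\subseteq F_0(A)\subseteq F_1(A)\subseteq\cdots$ of $A$ whose associated graded algebra is isomorphic to the locally finite graded algebra $A^{\sigma}$. By the definition recalled immediately before the statement, this is precisely the assertion that $A$ is a PBW deformation of $A^{\sigma}$.

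The only point that genuinely requires care is the verification in the second paragraph that $A^{\sigma}$ is a locally finite graded algebra generated in degrees $0$ and $1$; this is where it is essential that the coefficient ring is the field $\mathbb{K}$, since then the structure constants $c_{i,r}$ and $c_{i,j}$ sit in degree $0$ and the defining relations are homogeneous. Over a general graded base ring $R$ this would fail, which is why the statement is phrased only for extensions of $\mathbb{K}$. Everything else is a direct translation through Proposition~\ref{teo.Gr(A)}.
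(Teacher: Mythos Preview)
Your proof is correct and follows the same approach as the paper's: both invoke Proposition~\ref{teo.Gr(A)} to exhibit the filtration on $A$ and identify $Gr(A)$ with $A^{\sigma}$. You are more thorough than the paper in explicitly verifying that $A^{\sigma}$ is a locally finite graded algebra (the paper omits this check), though your additional verification that $A^{\sigma}$ is generated in degrees $0$ and $1$ is not actually required by the paper's definition of PBW deformation, which only asks that the base be locally finite graded.
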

\begin{proof}
$A$ is a filtered algebra with an ascending filtration $0 = F_{-1}(A) \subseteq F_0(A) \subseteq F_1(A) \subseteq \cdots$ as in  (\ref{eq1.3.1a}). By Theorem \ref{teo.Gr(A)} the associated graded algebra $Gr(A)$ is isomorphic to $A^{\sigma}$.
\end{proof}

\begin{definition}[\cite{Li5}, Definition 2.3]\label{def.koszul} A graded $A$-module $M$ is called a \emph{generalized Koszul module} if it has a (minimal) linear projective resolution
\begin{equation}\label{res.Koszul}
\to P^n\to  P^{n-1}\to\cdots\to P^0\to M\to 0,
\end{equation}
such that $P^i$ is generated in degree $i$ for all $i\geq 0$. The graded algebra $A$ is called a \emph{generalized Koszul algebra} if $A_0$ viewed as an $A$-module is generalized Koszul.
\end{definition}

A graded algebra $A=\bigoplus_{p\geq 0}A_p$ is called \emph{connected} if $A_0=\mathbb{K}$. Let $L:=\mathbb{K}\langle x_1,\dots, x_n\rangle$ be the free associative algebra (tensor algebra) in $n$ generators $x_1,\dots, x_n$. Note that $L$ is positively graded with graduation given by $L:=\bigoplus_{j\geq 0}L_j$ where $L_0= \mathbb{K}$ and $L_j$ spanned by all words of length $j$ in the alphabet $\{x_1, \dots, x_n\}$, for $j>0$. Let  $P$ be a subspace of $F_2(L):=\mathbb{K}\bigoplus L_1\bigoplus L_2$, the algebra $L/\langle P\rangle$ is called (nonhomogeneous) \emph{quadratic algebra}. $L/\langle P\rangle$ is called \emph{homogeneous quadratic algebra} if $P$ is a subspace of $L_2$, where $\langle P\rangle$ the two-sided ideal of $L$ generated by $P$. Let $I\subseteq \sum_{j\geq 2} L_{j}$ be a finitely-generated homogeneous ideal of $\mathbb{K}\langle x_1,\dots, x_n\rangle$ and let $R = \mathbb{K}\langle x_1,\dots, x_n\rangle/I$, which is a connected-graded $\mathbb{K}$-algebra generated in degree 1. Suppose $\sigma : R \to R$ is a graded algebra automorphism and $\delta : R(-1) \to R$ is a graded $\sigma$-derivation (i.e. a degree +1 graded $\sigma$-derivation $\delta$ of $R$). Let  $A := R[x; \sigma,\delta]$  be the associated \emph{graded Ore extension} of $R$; that is, $A = \bigoplus_{n\geq 0} Rx^n$ as an $R$-module, and for $r\in R$, $xr = \sigma(r)x + \delta(r)$. We consider $x$ to have degree 1 in $A$, and under this grading $A$ is a connected graded algebra generated in degree 1 (see \cite{Cassidy2008} and \cite{Phan}). Note that $A$ is quadratic if and only if $R$ is quadratic, and it is well known that $A$ is Koszul (in the classical sense) if and  only if $R$ is Koszul (see for example \cite{Phan},  Corollary 1.3). In the following theorem we show a similar result for skew $PBW$ extensions.

\begin{theorem}\label{result.princ}
If $R$ is a commutative generalized Koszul $\mathbb{K}$-algebra and  $A$ is a constant skew PBW extension of $R$, then $A$ is a generalized Koszul algebra.
\end{theorem}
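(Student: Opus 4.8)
The plan is to build a linear projective resolution of $A_0$ over $A$ by flat base change from $R$: the hypothesis that the extension is \emph{constant} forces $R$ to lie in the centre of $A$, which makes $A$ a (left and right) free — hence flat — module over the commutative ring $R$, and this lets one push a linear resolution of $R_0$ over $R$ up to a linear resolution of $A_0$ over $A$.

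First I would fix the grading: by Proposition \ref{NastaVOEjemplo1.3.11}, $A$ is an $\mathbb{N}$-graded ring, and writing each $f\in A$ uniquely as $f=\sum_{X\in{\rm Mon}(A)}r_XX$ with $r_X\in R$ (\cite{LezamaGallego}, Remark 2), the component of degree $d$ is $A_d=\bigoplus_{X}R_dX$; hence $A_0=\bigoplus_X R_0X$ and the graded ideal $J_A:=\bigoplus_{d\geq1}A_d$ equals $J_RA$, where $J_R=\bigoplus_{d\geq1}R_d$. Second I would record that, $A$ being constant, Proposition \ref{sigmadefinition} gives $x_ir=\sigma_i(r)x_i+\delta_i(r)=rx_i$ for every $r\in R$; combined with the commutativity of $R$ this yields $R\subseteq Z(A)$, so $A$ is an algebra over the commutative ring $R$, and since $rX=Xr$ the $R$-basis ${\rm Mon}(A)$ of Definition \ref{def.skewpbwextensions}(ii) is also a basis of $A$ as a free \emph{right} $R$-module; in particular $A$ is flat as a right $R$-module. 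Third, centrality makes $J_RA=AJ_R$ a two-sided ideal and gives an isomorphism of graded left $A$-modules
\[
A_0=A/J_A=A/AJ_R\;\cong\;A\otimes_R(R/J_R)=A\otimes_R R_0 .
\]

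The main step is then the base change itself. Since $R$ is generalized Koszul (Definition \ref{def.koszul}), the graded $R$-module $R_0$ has a linear projective resolution
\[
\cdots\longrightarrow Q^i\longrightarrow\cdots\longrightarrow Q^1\longrightarrow Q^0\longrightarrow R_0\longrightarrow 0
\]
with each $Q^i$ a graded projective $R$-module generated in degree $i$. Applying $A\otimes_R(-)$ and using the right $R$-flatness of $A$, I obtain an exact complex of graded left $A$-modules
\[
\cdots\longrightarrow A\otimes_R Q^i\longrightarrow\cdots\longrightarrow A\otimes_R Q^0\longrightarrow A\otimes_R R_0=A_0\longrightarrow 0 .
\]
Writing $P^i:=A\otimes_R Q^i$, the functor $A\otimes_R(-)$ carries graded projective $R$-modules to graded projective $A$-modules and degree-$0$ maps to degree-$0$ maps, so each $P^i$ is a graded projective $A$-module; and $P^i$ is generated in degree $i$, because $Q^i=R\,(Q^i)_i$ (where $(Q^i)_i$ is the degree-$i$ part) forces $P^i=A\cdot(1\otimes (Q^i)_i)$ with $1\otimes(Q^i)_i$ sitting in degree $i$. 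Hence $A_0$ has a linear projective resolution over $A$ with $P^i$ generated in degree $i$ for every $i\geq0$ — passing to the associated minimal resolution if minimality is desired, which preserves this degree condition — and therefore $A$ is a generalized Koszul algebra.

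I expect the real care to be needed not in a single hard obstacle but in the compatibility bookkeeping of the middle steps: confirming that the grading provided by Proposition \ref{NastaVOEjemplo1.3.11} is exactly the one under which $A_0\cong A\otimes_R R_0$ (so that the $A$-module to be resolved is literally the base change of $R_0$), and that both projectivity and the property ``generated in degree $i$'' survive $A\otimes_R(-)$ in this — possibly non-connected — grading. The assumption that $A$ is constant enters only through $R\subseteq Z(A)$, but it is indispensable: this is precisely what makes $A$ free, hence flat, as a right $R$-module and so keeps $A\otimes_R(-)$ exact; without it the change-of-rings argument breaks down.
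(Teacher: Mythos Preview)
Your proposal is correct and follows essentially the same route as the paper: both arguments tensor a linear projective resolution of $R_0$ over $R$ with the free (hence flat) $R$-module $A$, using the grading $A_j=R_j\otimes_R A$ of Proposition~\ref{NastaVOEjemplo1.3.11}, to produce a linear projective resolution of $A_0\cong R_0\otimes_R A$ over $A$. The only cosmetic difference is that the paper tensors on the right ($-\otimes_R A$, working with right modules) whereas you tensor on the left ($A\otimes_R-$), which is immaterial since $R$ is commutative and central in $A$; if anything, your version is more explicit about flatness, centrality, and the passage to a minimal resolution.
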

\begin{proof}
Let $R$ be a commutative generalized Koszul algebra, then: $R=\bigoplus_{j\geq 0}R_j$,
and $R_0$  as a graded right  $R$-module admits
a graded projective resolution
\begin{equation}\label{resol.R0}
\dotsb \xrightarrow{f_{i+1}} P^{i} \xrightarrow{f_i} \dotsb \xrightarrow{f_2} P^{1} \xrightarrow{f_1} P^{0} \xrightarrow{f_0} R_0 \to 0,
\end{equation}
such that $P^i$ is generated by its degree $i$ component, i.e., $P^i = P^i_i\cdot R$.\\

Let $A=\sigma(R)\langle x_1,\dots, x_n\rangle$ be a skew $PBW$ extension of $R$. Since $A$ is constant, then $A$ is a $R$-algebra and by Proposition \ref{NastaVOEjemplo1.3.11} $A$ is a graded algebra with graduation $A =  \bigoplus_{j\geq 0}A_j$ where
\begin{equation}\label{comp.Aj}
A_j= R_j\otimes_R A.
\end{equation}

 Now we note  that
\begin{equation}\label{resol.A0}
\dotsb \xrightarrow{f_{i+1}\otimes\iota_A}  P^{i}\otimes_R A  \xrightarrow{f_{i}\otimes\iota_A} \dotsb \xrightarrow{f_{2}\otimes\iota_A} P^{1}\otimes_R A \xrightarrow{f_{1}\otimes\iota_A} P^{0}\otimes_R A \xrightarrow{f_{0}\otimes\iota_A} R_0\otimes_R A \to 0
\end{equation}
is a graded projective resolution of $A_0$ as a graded right $A$-module. In effect: by Equation \ref{comp.Aj}, $R_0\otimes_R A=A_0$; if  $f_i: P^i \to P^{i-1}$ is a graded $R$-homomorphism then $f_i(P^i_j)\subseteq P^{i-1}_j$ and therefore $f_i\otimes i_A(P_j^{i}\otimes_R A)\subseteq P^{i-1}_j\otimes_R A$ ($\iota_A$ denotes the identical homomorphism), so  $f_i\otimes \iota_A: P^{i}\otimes_R A \to P^{i-1}\otimes_RA$   is a graded $A$-homomorphism  and (\ref{resol.A0})  is exact;
  since $P^i$ is a right graded $R$-module and $A$ is a left graded $R$-algebra, then by proof of Proposition \ref{NastaVOEjemplo1.3.11}, Equation (\ref{grad.prod}), $P^{i}\otimes_R A=\bigoplus_j(P_j^i\otimes_R A)$ is a  graded right $A$-module, for all $i\geq 0$; and
     $P^i\otimes_R A$ is a right projective $A$-module, for all $i\geq 0$.\\

We see now that for all  $i\geq 0$, $P^i\otimes_RA$ is generated by its degree $i$ component, i.e., $P^i\otimes_RA = (P^i_i\otimes_RA)\cdot A$. In effect: since  $P^i = P^i_i\cdot R$, then for $h\in  P^i$ $h =h_{i_1}r_1 + \cdots + h_{i_n}r_n$, $h_{i_s}\in P_i^i$, $r_s\in R$ ($1\leq s\leq n$, $n\geq 1$); so for $h\otimes a\in P^i\otimes_RA$ ($h\in P^i$, $a\in A$),
\begin{multline*}
h\otimes a= (h_{i_1}r_1 + \cdots + h_{i_n}r_n)\otimes a = h_{i_1}r_1\otimes a + \cdots + h_{i_n}r_n\otimes a\\
= (h_{i_1}\otimes r_1 + \cdots + h_{i_n}\otimes r_n)a\in (P^i_i\otimes_RA)\cdot A.
\end{multline*}
\end{proof}

We note that if $ A =\bigoplus_{j\geq 0}A_j$ is generalized Koszul and $A_0$ is semisimple then $A$ is  classical Koszul, i.e., in the sense of definition given by Beilinson, Ginzburg and Soergel in \cite{BeilinsonGinzburgSoerge1996}. Moreover if $A_0=\mathbb{K}$ is a field then $A$ is homogeneous Koszul, i.e., according to the definition given by Priddy in \cite{Priddy1970}.

\begin{example}\label{ex.diffus alg2} Diffusion algebra. The  diffusion algebra $\mathcal{D}$ is generated by $\{D_i, x_i \mid 1 \leq i \leq n\}$ over $\mathbb{K}$ with relations $x_ix_j = x_jx_i$, $x_iD_j = D_jx_i$, $1 \leq i, j \leq n$; \quad $c_{ij}D_iD_j - c_{ji}D_jD_i = x_jD_i - x_iD_j$, $i < j$, $c_{ij}$, $c_{ji}\in  \mathbb{K}^*$. Thus, $A \cong \sigma(\mathbb{K}[x_1, \dots, x_n])\langle D_1,\dots, D_n\rangle$ is a constant non quasi-commutative
skew $PBW$ extension of a commutative  Koszul $\mathbb{K}$-algebra  $R=\mathbb{K}[x_1, \dots, x_n]$. Then by Theorem \ref{result.princ}, the diffusion algebra  $\mathcal{D}$ is generalized Koszul.
\end{example}

For each $n\geq 0$, \emph{the syzygy functors} $\Omega^i$ is $\Omega^n(M):=ker(P_{n-1}\to P_{n-2})$, where $\cdots \to P_{2} \rightarrow P_{1} \rightarrow P_{0} \rightarrow M\to 0 $ is a minimal projective resolution of $M$. Every graded free $A$-module can be written as  $A\otimes_{\mathbb{K}}V$ where $V$ is a graded vector space and $A\otimes_{\mathbb{K}}V$ is given the tensor product grading.

\begin{proposition}[\cite{Li5}]\label{prop.Koszul syzy}
$M$ is a generalized Koszul $A$-module if and only if $M$ is generated in degree 0 and each syzygy $\Omega^i(M)$ is generated in degree $i$, for every $i \geq 1$. Moreover, from the  projective resolution (\ref{res.Koszul}), we have that $M_0\cong P_0^0$ and $\Omega^i(M)_i\cong P_i^i$ are projective $A_0$-modules, for all $i \geq 1$.
\end{proposition}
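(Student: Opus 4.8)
The plan is to reduce everything to a minimal graded projective resolution and then apply, repeatedly, an elementary dictionary between generation in a single degree and the structure of the top $P/JP$. Specifically, I would first record two facts valid in the present locally finite, bounded-below setting. (1) For a graded projective $A$-module $P$: $P$ is generated in degree $d$ if and only if $P/JP$ is concentrated in degree $d$; in that case $P/JP\cong P_d$, the isomorphism $P\cong A\otimes_{A_0}P_d$ holds, and $P_d$ is a projective $A_0$-module, since splitting $P$ off a free $A$-module generated in degree $d$ exhibits $P_d$ as a direct summand of a free $A_0$-module. (2) If $p\colon P\to N$ is a graded projective cover (so $P$ is projective and $\ker p\subseteq JP$), then applying $-\otimes_A A_0$ to $0\to\ker p\to P\to N\to 0$ yields $P/JP\cong N/JN$; and if moreover $N$ is generated in degree $d$ then $N/JN\cong N_d$ is concentrated in degree $d$. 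Both (1) and (2) are the graded Nakayama lemma packaged appropriately, and form the machinery developed in \cite{Li5}.

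For the forward implication, suppose $M$ is generalized Koszul, so it admits a linear projective resolution. A short argument shows its minimal projective resolution is then also linear: any graded projective resolution of $M$ decomposes as the direct sum of a minimal one with an exact, hence split, complex of projectives, i.e. a direct sum of trivial two-term complexes on which the differential is an isomorphism, each concentrated in two consecutive homological degrees; were some term $P^i$ of a linear resolution generated in degree $i$, such a trivial summand would be generated in two consecutive degrees at once, forcing it to vanish. Hence we may assume each $P^i$ in (\ref{res.Koszul}) is generated in degree $i$. Then $M=\operatorname{coker}(P^1\to P^0)$ is a quotient of $P^0$, hence generated in degree $0$, and for $i\geq1$, $\Omega^i(M)=\operatorname{im}(P^i\to P^{i-1})$ is a quotient of $P^i$, hence generated in degree $i$.

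Conversely, assume $M$ is generated in degree $0$ and each $\Omega^i(M)$ is generated in degree $i$ for $i\geq1$. Construct the minimal resolution of $M$ by iterated projective covers: $P^0\to M$ a projective cover, $\Omega^1(M)=\ker(P^0\to M)$, $P^1\to\Omega^1(M)$ a projective cover, and so on, so that $P^i\to\Omega^i(M)$ is a projective cover for every $i\geq0$, with $\Omega^0(M)=M$. By facts (1) and (2), for each $i$ we get $P^i/JP^i\cong\Omega^i(M)/J\Omega^i(M)\cong\Omega^i(M)_i$ concentrated in degree $i$, hence $P^i$ is generated in degree $i$; thus the minimal resolution (\ref{res.Koszul}) is linear and $M$ is generalized Koszul. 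The last assertion of the proposition then drops out of the same chain of isomorphisms: $P^i_i\cong P^i/JP^i\cong\Omega^i(M)/J\Omega^i(M)\cong\Omega^i(M)_i$ for $i\geq1$ (and $P^0_0\cong M_0$ for $i=0$), with $P^i_i$ a projective $A_0$-module by fact (1).

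The step I expect to be the real work is making fact (1) airtight when $A_0$ is not assumed semisimple: one must run graded Nakayama for modules that are only bounded below and locally finite, confirm that the canonical multiplication map $A\otimes_{A_0}P_d\to P$ is an isomorphism for $P$ projective and generated in degree $d$, and deduce projectivity of $P_d$ over $A_0$; one also needs graded projective covers to exist in this setting. These are precisely the foundational results imported from \cite{Li5}, so the argument above is in the end an assembly of them rather than a self-contained proof.
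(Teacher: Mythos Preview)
The paper does not prove this proposition at all: it is stated as a citation from \cite{Li5} and no argument is given in the present paper. So there is nothing to compare your proposal against here; your write-up is effectively supplying what the authors chose to import as a black box.

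That said, your reconstruction is the standard one and is essentially correct. The two facts you isolate---that a graded projective generated in one degree has $P/JP$ concentrated there with $P_d$ projective over $A_0$, and that a projective cover induces $P/JP\cong N/JN$---are exactly the graded-Nakayama ingredients from \cite{Li5}, and the iterated-cover construction of the minimal resolution is the right engine. One small wording glitch: in your forward implication you write ``were some term $P^i$ of a linear resolution generated in degree $i$, such a trivial summand would be generated in two consecutive degrees at once''; what you mean is that a nonzero contractible two-term summand $Q\xrightarrow{\sim}Q$ sitting in homological degrees $i+1$ and $i$ would force $Q$ to be generated simultaneously in internal degrees $i+1$ and $i$, which is impossible, hence the minimal resolution inherits linearity. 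Your concluding caveat is apt: the honest content lies in checking that graded projective covers exist and that Nakayama goes through when $A_0$ is not semisimple but $A$ is locally finite and modules are bounded below---precisely the foundational lemmas of \cite{Li5} that the authors are invoking by citation.
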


\begin{remark}
From the Proposition \ref{prop.Koszul syzy} we have that skew PBW extensions are not generalized Koszul  $R$-modules. In fact, if $A=\sigma(R)\langle x_1,\dots x_n\rangle$ is a generalized Koszul $R$-module (with $x_1, x_2,\dots, x_n$ in degree 1), then $A$ is generated in degree 0, i.e., $A=R\cdot A_0$, which is a contradiction.
\end{remark}

Let $A =\bigoplus_{i\geq 0}A_i$ be a locally finite graded algebra generated in degrees 0 and 1. The \emph{graded radical} grad($A$) is the intersection of all maximal proper graded submodules of $A$. Let $\textbf{r}$ be the radical of $A_0$, and $\cR := \langle \textbf{r}\rangle$ be the two-sided ideal of $A$ generated by $\textbf{r}$. We then define the quotient graded algebra $\bar{A}:=A/\cR =\bigoplus_{i\geq 0}A_i/\cR_i$. $\bar{A}$ is a locally finite graded algebra for which the grading is induced from that of $A$, and $\cR_s=\sum_{i=0}^sA_i\textbf{r}A_{s-i}$. Note that $\bar{A}_0=A_0/\textbf{r}$ is a semisimple algebra. Given an arbitrary graded A-module $M=\bigoplus_{i\in \mathbb{Z}}M_i$ such that $M_i=0$, for $i\ll 0$, define $\bar{M}=M/\cR M=\bigoplus_{i\in \mathbb{Z}}M_i/(\cR M)_i$. Then $\bar{M}$ is a graded $\bar{A}$-module and $\bar{M}\cong \bar{A}\otimes_AM$ (see \cite{Li5}). Let $A=\sigma(R)\langle x_1,\dots, x_n\rangle=\bigoplus_{i\geq 0}A_i$ be a graded skew PBW extension of $R=\bigoplus_{j\geq 0}R_j$,  $\Sigma:=\{\sigma_1,\dotsc, \sigma_n\}$ and $\Delta:=\{\delta_1,\dotsc, \delta_n\}$, where $\sigma_i$ and $\delta_i$ ($1\leq i\leq n$) are as in the Proposition \ref{sigmadefinition}. If $I$ is an ideal of $R$, $I$ is called $\Sigma$-\emph{invariant} if $\sigma_i(I)\subseteq I$  and $I$ is called $\Delta$-\emph{invariant} if $\delta_i(I)\subseteq I$, for every $1\leq i\leq n$ (see \cite{LezamaAcostaReyes2015}, Definition 2.1).  Note that $\textbf{r}$ is $\Sigma$-invariant but it is not $\Delta$-invariant because $\delta_i(\textbf{r})\subseteq A_1$ for each $\delta_i\in \Delta$, and therefore $\textbf{r}A_1\subseteq A_1$ but $\delta_i(\textbf{r}) A_1\subseteq A_2$. So, in general, $A_1\textbf{r} \neq\textbf{ r}A_1$. This leads us to propose the following.\\

\begin{proposition}\label{prop.radA1}
 If $A$ is a quasi-commutative bijective skew PBW extension of the $\mathbb{K}$-algebra $R$,  then $A_1\textbf{r} = \textbf{ r}A_1$.
\end{proposition}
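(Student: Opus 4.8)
\textbf{Proof plan for Proposition \ref{prop.radA1}.}

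The plan is to exploit the fact that in a quasi-commutative bijective skew PBW extension the variables satisfy $x_i r = c_{i,r} x_i$ with $c_{i,r} \in R$ (no derivation terms), and that these structure constants are invertible. First I would recall that $A_1 = R\bigoplus Rx_1 \bigoplus\cdots\bigoplus Rx_n$ as a left $R$-module, and likewise $A_1 = R\bigoplus x_1R\bigoplus\cdots\bigoplus x_nR$ as a right $R$-module, so that $A_1\textbf{r}$ is spanned over $\mathbb{K}$ by elements of the form $s\rho$ and $x_i s\rho$ with $s\in R$, $\rho\in\textbf{r}$, while $\textbf{r}A_1$ is spanned by $\rho s$ and $\rho x_i s = \rho\, \sigma_i^{-1}(\text{something})\cdots$; the point is that $R$ is commutative, so the degree-$0$ parts $\textbf{r}R = R\textbf{r}$ agree trivially, and everything reduces to comparing the degree-$1$ pieces $\sum_i x_i R\,\textbf{r}$ versus $\sum_i \textbf{r}\,R x_i$.

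The heart of the argument is the commutation identity. Since $A$ is quasi-commutative, for $\rho\in\textbf{r}$ and $1\le i\le n$ we have $x_i\rho = c_{i,\rho}x_i$ for some nonzero $c_{i,\rho}\in R$; moreover Proposition \ref{sigmadefinition} (applied in the quasi-commutative case, where $\delta_i=0$) gives $c_{i,\rho}=\sigma_i(\rho)$, and since $\textbf{r}$ is $\Sigma$-invariant (as noted just before the proposition) we get $\sigma_i(\rho)\in\textbf{r}$. Hence $x_i\rho = \sigma_i(\rho)x_i \in \textbf{r}x_i \subseteq \textbf{r}A_1$, which shows $A_1\textbf{r}\subseteq\textbf{r}A_1$ after also handling the degree-$0$ contributions via commutativity of $R$. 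For the reverse inclusion I would use bijectivity: $\sigma_i$ is an automorphism of $R$, and one checks $\sigma_i(\textbf{r})=\textbf{r}$ (the radical is preserved by any automorphism, and since $\sigma_i$ is onto, $\sigma_i^{-1}(\textbf{r})=\textbf{r}$ as well). Then for $\rho\in\textbf{r}$, $\rho x_i = \sigma_i(\sigma_i^{-1}(\rho))x_i = x_i\,\sigma_i^{-1}(\rho) \in x_i\textbf{r}\subseteq A_1\textbf{r}$, giving $\textbf{r}A_1\subseteq A_1\textbf{r}$.

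Combining the two inclusions yields $A_1\textbf{r}=\textbf{r}A_1$. The main obstacle — really the only subtlety — is making the identification $c_{i,\rho}=\sigma_i(\rho)$ rigorous and confirming $\sigma_i(\textbf{r})=\textbf{r}$: the inclusion $\sigma_i(\textbf{r})\subseteq\textbf{r}$ is the $\Sigma$-invariance already recorded in the text, and equality comes from $\sigma_i$ being a surjective (hence bijective) endomorphism of $R$, so that the image of the Jacobson radical under the inverse automorphism is again the Jacobson radical. Everything else is bookkeeping with the left/right $R$-module decompositions of $A_1$ and the commutativity of $R$ in degree $0$.
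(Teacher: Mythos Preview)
Your argument is correct and follows essentially the same route as the paper: both proofs use the quasi-commutative relation $x_i\rho=\sigma_i(\rho)x_i$, invoke bijectivity of $\sigma_i$ to get $\sigma_i(\textbf{r})=\textbf{r}$ (the paper phrases this via ``$\textbf{m}$ maximal iff $\sigma_i(\textbf{m})$ maximal''), and then check the two inclusions directly. One small slip: in the paper's grading $A_0=R$ and $A_1=\bigoplus_i Rx_i$, so your summand $R$ in $A_1$ is extraneous---but since you dispatch that ``degree-$0$'' piece trivially via commutativity of $R$, it does no harm.
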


\begin{proof} According to Proposition \ref{remarkAsigma},  $A\cong A^{\sigma}$. So, by Proposition \ref{teo.Gr(A)},  $A$ is a graded skew PBW extension with $R_0=A_0=R$, so $\textbf{r}$ is the radical of $R$. As $\sigma_i$ is bijective, for each $1\leq i\leq n$, then $\textbf{m}$ is maximal if and only $\sigma_i(\textbf{m})$ is maximal, so  $\sigma_i(\textbf{r})=\textbf{r}$. Note that $A_1$ is the space  generated by $\{r_1x_1,\dots, r_nx_n\}$ with $r_1,\dots r_n\in R_0=R=A_0$.
$\subseteq)$: Without loss of generality, suppose that $x_ia_0\in A_1\textbf{r}$ with $a_0$ in $\textbf{r}$, then $x_ia_0=\sigma_i(a_0)x_1\in\textbf{r} A_1$.\\
$\supseteq)$: Let $a_0x_i\in \textbf{ r}A_1$ with $a_0$ in $\textbf{r}$, then for $a_0$ there is $b_0\in \textbf{r}$ such that $\sigma_i(b_0)=a_0$ and for $b_0$, $a_0x_i= \sigma_i(b_0)x_i=x_ib_0\in A_1\textbf{r}$.
\end{proof}
According to the Proposition \ref{prop.radA1}, for a  quasi-commutative bijective skew PBW extension $A$ of the $\mathbb{K}$-algebra $R$, with $R$ concentrate in degree 0, we have that $\cR_s=\textbf{r}A_s$ and therefore $\bar{A}=\bigoplus_{i\geq 0}A_i/\textbf{r}A_i$ (see \cite{Li5}).

Note that if $A$ is a skew PBW extension as in Proposition
Li in  \cite{Li5}, shows the relationship between classical Koszul algebras and generalized Koszul algebras. Classical Koszul algebras are understood in the sense of the Definition 1.2.1 of  \cite{BeilinsonGinzburgSoerge1996}, i,e.,  \textit{Koszul algebra} is a positively graded $\mathbb{K}$-algebra $A =\bigoplus_{i\geq 0}A_i$ such that
 $A_0$ is semisimple and $A_0$ considered as a graded left $A$-module admits a graded projective resolution \[
\dotsb \to P^{n} \to \dotsb \to P^{1} \to P^{0} \twoheadrightarrow A_0,
\]
such that $P^i$ is generated by its degree $i$ component, i.e., $P^i = AP^i_i$.

\begin{theorem}  Let $A=\sigma(R)\langle x_1,\dots, x_n\rangle$ be a quasi-commutative bijective skew PBW extension of the locally finite $\mathbb{K}$-algebra $R$. Then $A$ is  a generalized Koszul algebra if and only if $\bar{A}=\bigoplus_{i\geq 0}A_i/\textbf{r}A_i$ is a classical Koszul algebra.
\end{theorem}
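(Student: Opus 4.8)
The plan is to reduce the statement to a known result of Li from \cite{Li5}, which relates the generalized Koszul property of a locally finite graded algebra $A$ to the classical Koszul property of the associated semisimple-at-degree-zero algebra $\bar A = A/\cR$. To apply that machinery we must first put $A$ in the right shape. First I would invoke Proposition \ref{remarkAsigma} and Proposition \ref{teo.Gr(A)}: since $A$ is quasi-commutative and bijective, $A \cong A^{\sigma}$, so $A$ is genuinely graded with $A_0 = R$ and $R$ concentrated in degree $0$; in particular $A$ is a locally finite graded algebra generated in degrees $0$ and $1$, and $\textbf{r}$ (the radical of $A_0 = R$) is exactly the radical needed to form $\cR = \langle \textbf{r}\rangle$.

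Next I would use Proposition \ref{prop.radA1}, which gives $A_1\textbf{r} = \textbf{r}A_1$ for a quasi-commutative bijective skew PBW extension. This is the crucial structural input: because left and right multiplication by $\textbf{r}$ agree on $A_1$ (hence, by induction using $A_{s} = A_1 \cdots A_1$, on every $A_s$), one gets $\cR_s = \sum_{i=0}^s A_i \textbf{r} A_{s-i} = \textbf{r}A_s = A_s\textbf{r}$, so that
\[
\bar A = \bigoplus_{i\geq 0} A_i/\textbf{r}A_i
\]
as was already recorded in the paragraph following Proposition \ref{prop.radA1}. With $\cR$ this well-behaved, $\bar A$ is again a locally finite graded algebra generated in degrees $0$ and $1$, and $\bar A_0 = R/\textbf{r}$ is semisimple, so the notion of "classical Koszul algebra" for $\bar A$ makes sense in the sense of \cite{BeilinsonGinzburgSoerge1996}.

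Finally I would quote the corresponding theorem of Li \cite{Li5}: for a locally finite graded algebra $A$ generated in degrees $0$ and $1$ such that $\cR$ is generated by $\textbf{r}$ in the sense above, $A$ is a generalized Koszul algebra (i.e.\ $A_0$ viewed as an $A$-module is generalized Koszul, Definition \ref{def.koszul}) if and only if $\bar A$ is a classical Koszul algebra. Combining this equivalence with the identification $\bar A = \bigoplus_{i\geq 0} A_i/\textbf{r}A_i$ obtained above yields the statement. The two directions are then both immediate from Li's equivalence; the only thing one must still check carefully is that the hypotheses of Li's theorem are met, i.e.\ that $A$ is locally finite (given) and generated in degrees $0$ and $1$ (true for $A^{\sigma}$ since the $x_i$ have degree $1$ and $A_0 = R$ sits in degree $0$), and that the passage to $\bar A$ behaves as in \cite{Li5}. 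I expect the main obstacle — really the only non-formal point — to be verifying that $\cR_s = \textbf{r}A_s$ holds in all degrees, which is exactly where the bijectivity and quasi-commutativity hypotheses are used via Proposition \ref{prop.radA1}; once that is in hand the rest is a direct citation.
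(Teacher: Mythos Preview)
Your proposal is correct and follows essentially the same route as the paper: verify that $A$ is a locally finite graded algebra with $A_0=R$, invoke Proposition~\ref{prop.radA1} to get $A_1\textbf{r}=\textbf{r}A_1$ (hence $\cR_s=\textbf{r}A_s$), and then cite Li's result (\cite{Li5}, Theorem~3.6). The only points you glossed over that the paper makes explicit are (i) local finiteness of $A$ is not literally ``given'' --- only $R$ is assumed locally finite, and the paper writes down the finite $\mathbb{K}$-basis of each $A_p$ --- and (ii) Li's Theorem~3.6 also requires $A$ to be projective as an $A_0$-module, which the paper checks by noting that $A$ is free over $R=A_0$ by the very definition of a skew PBW extension.
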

\begin{proof}
Note that $A$ is a locally finite  graded algebra. Indeed, $A$ is a graded algebra  and $\dim_\mathbb{K}A_0 = \dim_\mathbb{K}R_0=\dim_\mathbb{K}R$, $\dim_\mathbb{K}A_1 = \dim_\mathbb{K}R_1 + n=n$; let $\mathcal{B}$ be a (finite) base of $R=R_0$, then for a fixed $p\geq 2$ the set $\{rx^{\alpha} \mid |\alpha|= p,\  r\in B \text{  and } x^{\alpha}\in {\rm Mon}(A)\}$ is a finite base  for $A_p$. Now, as $A$ is  free as left $R$-module then $A$ is  a projective  $R_0$-module. By Proposition \ref{prop.radA1}, $A_1\textbf{r} = \textbf{ r}A_1$.  The result is then obtained from \cite{Li5}, Theorem 3.6.
\end{proof}

\section{Koszulity for augmented skew PBW extensions}

%tomado de (2009) Generalized Koszul properties for augmented algebras pag 2 (1523) y (2009) Koszul and Generalized Koszul properties page 6
For a $\mathbb{K}$-vector space $V$, we use $T(V)$ to
denote the tensor algebra of $V$ over $\mathbb{K}$. If $R$ is a semisimple $\mathbb{K}$-algebra and $V$ is
an $R$-bimodule, then $T_R(V)$ will be the tensor algebra of $V$ over $R$.

\begin{definition}[\cite{Phan1}]\label{def.augmented} Let $R$ be a $\mathbb{K}$-algebra.
 \begin{enumerate}
\item[\rm(i)] A $\mathbb{K}$-algebra $A$   is said to be \emph{augmented} if  there is an ideal $A_+ \subset A$ such that $A= A_+ \bigoplus \mathbb{K} \cdot 1$.
 The augmentation is then  $\varepsilon : A \twoheadrightarrow \mathbb{K}$.
\item[\rm(ii)]  An  $R$-algebra $A$ is $R$-\emph{augmented} if  there is an ideal $A_+ \subset A$ such that $A= A_+ \bigoplus R \cdot 1$ as $R$-module (\cite{Phan2}, Definition II.2.1).
\item[\rm(iii)] For an $R$-augmented algebra $A$, a grading $A = \bigoplus_{n\geq 0}A_n$ is
\emph{compatible} with the augmentation if $A = T_R(V)/I$ for some finite-dimensional $R$-bimodule $V$ and finitely-generated homogeneous ideal $I \subset \sum_{i\geq 2} V^{\otimes i}$,  $A_n = V^{\otimes n}$ mod $I$, and $A_+ = \bigoplus_{n>0} A_n$. More specifically, we will say that A is
\emph{connected-graded} if $R = \mathbb{K}$ (\cite{Phan2}, Definition II.2.2).
\end{enumerate}
\end{definition}
If $A$ is $R$-augmented, we regard ${_AR}$ as the $A$-module $A/A_+$.

\begin{proposition}\label{prop.augm skew pbw}
A skew PBW extension  is an $R$-augmented algebra if and only if $A$ is pre-commutative and constant.
\end{proposition}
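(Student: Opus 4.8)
The plan is to unwind both directions from the defining property $A = A_+ \oplus R\cdot 1$ as $R$-modules, where $A_+$ is a two-sided ideal, together with the normal-form description of elements of a skew PBW extension. Recall that every $f\in A$ has a unique expression $f = c_0 + c_1X_1 + \cdots + c_tX_t$ with $c_i\in R$ and $X_i\in {\rm Mon}(A)\setminus\{1\}$; the natural candidate for the augmentation submodule is $A_+ := \bigoplus_{X\in {\rm Mon}(A),\, X\neq 1} RX$, the left $R$-span of the nontrivial standard monomials, so that $A = A_+\oplus R\cdot 1$ holds automatically as a direct sum of left $R$-modules by condition (ii) of Definition \ref{def.skewpbwextensions}. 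The entire content of the proposition is therefore: \emph{this particular $A_+$ is a two-sided ideal of $A$ if and only if $A$ is pre-commutative and constant}, and moreover any $R$-augmentation structure on $A$ must have this shape.

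For the ``if'' direction, assume $A$ is pre-commutative and constant. Since $A$ is constant, Proposition \ref{sigmadefinition} gives $\sigma_i = {\rm id}_R$ and $\delta_i = 0$, so $x_i r = r x_i$ for all $r\in R$; hence $A_+$ is stable under left and right multiplication by $R$. It remains to check stability under left and right multiplication by each $x_i$. Left multiplication: $x_i\cdot(rX) = r x_i X$; if $X = x_1^{\alpha_1}\cdots x_n^{\alpha_n}$ then $x_iX$ is, after applying the pre-commutative relations $x_jx_k - c_{k,j}x_kx_j \in Rx_1+\cdots+Rx_n$ finitely many times to reorder, an $R$-linear combination of standard monomials each of positive degree (degree is preserved, and $x_iX$ already has degree $|\alpha|+1\geq 1$), so it lies in $A_+$. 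Right multiplication $x_i\cdot$ — rather $(rX)\cdot x_i = rXx_i$ — is handled the same way, using constancy to move $r$ past and pre-commutativity to renormalize $Xx_i$. Because the relations in (iv) for a pre-commutative extension have \emph{no} constant term (they live in $Rx_1+\cdots+Rx_n$, not in $R+Rx_1+\cdots+Rx_n$), reordering never produces a degree-$0$ term, which is exactly what keeps us inside $A_+$; this is the crux of why pre-commutativity (rather than the weaker Definition \ref{def.skewpbwextensions}(iv)) is needed. Thus $A_+$ is a two-sided ideal and $A$ is $R$-augmented.

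For the ``only if'' direction, suppose $A$ is $R$-augmented via some ideal $A_+$ with $A = A_+\oplus R\cdot 1$. First I would argue that $A_+$ must coincide with the left $R$-span of the nontrivial monomials: the augmentation $\varepsilon\colon A\to R$ is the $R$-linear projection killing $A_+$ and fixing $R\cdot 1$, and since $A_+$ is a \emph{two-sided} ideal, $\varepsilon$ is in fact a ring homomorphism (this uses $A_+\cdot A_+\subseteq A_+$ and $R\cdot A_+, A_+\cdot R\subseteq A_+$ together with the direct-sum decomposition). Applying $\varepsilon$ to $x_i r - c_{i,r}x_i\in R$ from (\ref{sigmadefinicion1}): since $\varepsilon$ is a ring map and $\varepsilon(x_i)$ lies in $R$, commuting with everything, we get $\varepsilon(x_i)(r - c_{i,r}) = x_ir - c_{i,r}x_i$ in $R$; comparing with Proposition \ref{sigmadefinition}, $x_ir - c_{i,r}x_i = \delta_i(r)$ and $c_{i,r} = \sigma_i(r)$, and pushing this through forces $\sigma_i(r) = r$ and $\delta_i(r) = 0$ for all $r$ — i.e. constancy. (Here I would be careful: the clean way is to set $\varepsilon(x_i) = t_i\in R$, expand $\varepsilon(x_ir) = \varepsilon(x_i)\varepsilon(r)$, i.e. $\varepsilon(\sigma_i(r)x_i + \delta_i(r)) = \sigma_i(r)t_i + \delta_i(r) = t_i r$, and also apply $\varepsilon$ directly to deduce $t_i = \varepsilon(x_i)$ is forced; matching coefficients in the free left $R$-module $A$ then yields $\sigma_i = {\rm id}$ and $\delta_i = 0$.) Similarly, applying $\varepsilon$ to the relation (\ref{sigmadefinicion2}) $x_jx_i - c_{i,j}x_ix_j = s_0 + s_1x_1 + \cdots + s_nx_n$ with $s_k\in R$: the left side maps under $\varepsilon$ to $t_jt_i - c_{i,j}t_it_j\in R$, forcing the constant term $s_0 = t_jt_i - c_{i,j}t_it_j - \sum_k s_k t_k$; combined with constancy ($c_{i,j}$ a constant, and in fact one shows $t_i = 0$ once one knows $R\cdot 1$ is complemented by the monomial span, since $x_i\notin R\cdot 1$ forces $\varepsilon(x_i)=0$), this collapses to $s_0 = 0$, which is precisely pre-commutativity.

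The main obstacle I anticipate is the bookkeeping in the ``only if'' direction: one must first pin down that $A_+ = \bigoplus_{X\neq 1}RX$ — equivalently $\varepsilon(x_i) = 0$ for all $i$ — before the coefficient-matching arguments go through cleanly. The slick route is: $x_i = \varepsilon(x_i)\cdot 1 + (x_i - \varepsilon(x_i)\cdot 1)$ is the decomposition along $R\cdot 1\oplus A_+$; but $x_i$ is one of the free basis monomials of $A$ over $R$, so its component in $R\cdot 1$ relative to the \emph{monomial} basis is $0$, and one must reconcile the abstract complement $A_+$ with the monomial complement. The cleanest fix is to observe that $A/A_+ \cong R$ as $R$-algebras with $x_i\mapsto 0$ is forced by the fact that $A_+$ being a proper ideal with $R$-module complement $R\cdot 1$ of rank one makes $A/A_+\cong R$, and the composite $R\hookrightarrow A\twoheadrightarrow A/A_+$ must be an isomorphism (it is injective since $R\cap A_+ = 0$ and surjective by the direct sum), under which each $x_i$ necessarily maps to $0$ because the quotient is generated by the image of $R$ alone. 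Once $\varepsilon(x_i) = 0$ is secured, the coefficient comparisons above are routine. I would present the ``if'' direction first (it is short and self-contained) and then the ``only if'' direction using the ring-homomorphism property of $\varepsilon$.
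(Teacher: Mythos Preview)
Your ``if'' direction is correct and coincides with the paper's: both take $A_+$ to be the left $R$-span of the nontrivial monomials (equivalently, the ideal generated by $x_1,\ldots,x_n$) and check it is two-sided using constancy and pre-commutativity.

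Your ``only if'' direction has a genuine gap in the constancy step. Applying the ring homomorphism $\varepsilon$ to the relation $x_ir=\sigma_i(r)x_i+\delta_i(r)$ only yields the scalar identity $\sigma_i(r)t_i+\delta_i(r)=t_ir$ in $R$ (with $t_i=\varepsilon(x_i)$); there is nothing left to ``match coefficients'' against, since you have already passed to $R$. Even granting $t_i=0$ this gives $\delta_i(r)=0$ but says nothing about $\sigma_i$. The paper avoids this entirely by exploiting the hypothesis that $A$ is an $R$-\emph{algebra}: for commutative $R$ this means $R$ lies in the center of $A$, so $rx_i=x_ir$ holds in $A$ itself, and then writing $rx_i=\sigma_i(r)x_i+\delta_i(r)$ and comparing coefficients of the basis elements $1,x_i$ in the free left $R$-module $A$ immediately gives $\sigma_i(r)=r$ and $\delta_i(r)=0$. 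That centrality-of-$R$ observation is the key idea you are missing.

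A secondary issue: your argument that $\varepsilon(x_i)=0$ is not valid. From the isomorphism $R\cong A/A_+$ you conclude only that the image of $x_i$ lies in the image of $R$; it need not be zero. (The paper, for its part, simply asserts $x_jx_i\in A_+$ and $r_kx_k\in A_+$ without justifying $x_i\in A_+$, so the same point is implicit there.) You were right to flag this as the main obstacle, but the fix you sketch does not close it.
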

\begin{proof}
$\Rightarrow)$ Let  $A= \sigma(R)\langle x_1,\dots x_n\rangle = A_+ \bigoplus R \cdot 1= A_+ \bigoplus R$ an $R$-augmented skew PBW extension and let $r\in R\ \backslash \ \{0\}$. Then $r(x_ix_j)=  x_i(r x_j)= (x_ir)x_j=(\sigma_i(r)x_i+\delta_i(r))x_j=(\sigma_i(r)x_i)x_j+\delta_i(r)x_j=\sigma_i(r)(x_ix_j)+\delta_i(r)x_j$. As ${\rm Mon}(A)$ is $R$-base, then $\sigma_i(r)=r$ and $\delta_i(r)=0$, i.e., $A$ is constant. Now, as $x_jx_i$, $c_{i,j}x_ix_j\in A_+$, where $c_{i,j}$ is as in Definition \ref{def.skewpbwextensions}. Then $x_jx_i - c_{i,j}x_ix_j = r_0 + r_1x_1+\cdots + r_nx_n\in A_+$ with $r_0, r_1, \dots, r_n\in R$. As $r_1x_1+ \cdots + r_nx_n\in A_+$, $r_0\in A_+$ and so $r_0=0$, i.e., $A$ is pre-commutative.\\
$\Leftarrow)$ As $x_ir=rx_i$ and $x_jx_i - c_{i,j}x_ix_j \in Rx_1 +\cdots + Rx_n$, for each $r\in R$ and each $1\leq i\leq n$, then $A$ is an $R$-algebra with $A= A_+ \bigoplus R$ as $R$-module, where $A_+$ is the ideal generated by $\{x_1, \dots, x_n\}$.
\end{proof}

\begin{example}\label{ex.augmented skew} Let $A$ be skew PBW extension of a connected $\mathbb{K}$-algebra $R$.
 If $A$ is  pre-commutative and of endomorphism type, and   $f: A\to R$ is as in Proposition \ref{prop.homo}, then $f$ is an  augmentation and $A$ is an augmented algebra.
\end{example}

\begin{proposition}\label{prop.compatible}
If $A$ is an $R$-augmented and quasi-commutative skew PBW extension, then the graduation $Gr(A)$ from Proposition \ref{teo.Gr(A)} is compatible with the augmentation of $A$.
\end{proposition}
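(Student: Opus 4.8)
The plan is to unwind the definition of "compatible with the augmentation" (Definition \ref{def.augmented}(iii)) and verify each of its requirements for $A$ using the quasi-commutativity and the $R$-augmented hypothesis. Recall from Proposition \ref{teo.Gr(A)} that when $A$ is quasi-commutative we have $Gr(A)\cong A^{\sigma}\cong A$, where $A^{\sigma}=\sigma(R)\langle z_1,\dots,z_n\rangle$ has defining relations $z_ir=c_{i,r}z_i$ and $z_jz_i=c_{i,j}z_iz_j$. By Proposition \ref{prop.augm skew pbw}, since $A$ is $R$-augmented it is pre-commutative and constant; combined with quasi-commutativity this forces $c_{i,r}=r$ for every $r\in R$, so in fact the relations of $A^{\sigma}$ collapse to $z_ir=rz_i$ and $z_jz_i=c_{i,j}z_iz_j$ with $c_{i,j}\in \mathbb{K}^*$ constants. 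Thus $Gr(A)$ is generated as an $R$-algebra by $z_1,\dots,z_n$ in degree $1$, and $Gr(A)_+=\bigoplus_{n>0}Gr(A)_n$ is exactly the ideal generated by $\{z_1,\dots,z_n\}$.

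**Next I would** produce the presentation $Gr(A)=T_R(V)/I$. Take $V$ to be the free $R$-bimodule on the symbols $z_1,\dots,z_n$; this is finite-dimensional over $R$ (indeed $V\cong R^n$ on each side since the $z_i$ are central over $R$ once constancy is used, so $V=\bigoplus_i Rz_i$). There is a natural surjection $T_R(V)\twoheadrightarrow Gr(A)$ sending the degree-$1$ generators to the $z_i$; I would check it is graded, using that $Gr(A)$ is generated in degree $1$ over $Gr(A)_0=R$ (this is where I use that $Gr(A)=A^{\sigma}$ and that in $A^{\sigma}$ every standard monomial of degree $n$ lies in $V^{\otimes n}\bmod I$). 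Let $I$ be the kernel. The relations $z_jz_i-c_{i,j}z_iz_j=0$ for $i<j$ generate a homogeneous ideal contained in $\sum_{i\geq 2}V^{\otimes i}$ (in fact in $V^{\otimes 2}$), and by the PBW basis property of skew PBW extensions (Definition \ref{def.skewpbwextensions}(ii)) these quadratic relations are a complete set, i.e. $I=\langle z_jz_i-c_{i,j}z_iz_j : i<j\rangle$; this is the analogue of the classical Diamond-Lemma statement that the standard monomials form a basis. Hence $I$ is finitely generated and homogeneous with $I\subset\sum_{i\geq 2}V^{\otimes i}$, $Gr(A)_n=V^{\otimes n}\bmod I$, and $Gr(A)_+=\bigoplus_{n>0}Gr(A)_n$, which is precisely compatibility.

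**The main obstacle** is the verification that the quadratic relations $z_jz_i=c_{i,j}z_iz_j$ actually generate the \emph{entire} kernel $I$ — equivalently, that $Gr(A)=A^{\sigma}$ is a (homogeneous) quadratic algebra over $R$ with no higher-degree relations. The clean way to see this is to invoke the defining property of skew PBW extensions directly: $A$ is a free left $R$-module with basis $\mathrm{Mon}(A)=\{x^{\alpha}\}$, so $A^{\sigma}$ likewise has $R$-basis $\{z^{\alpha}\}$; on the other hand the quotient $T_R(V)/\langle z_jz_i-c_{i,j}z_iz_j\rangle$ is spanned over $R$ by the ordered monomials $z_1^{\alpha_1}\cdots z_n^{\alpha_n}$, and the surjection onto $A^{\sigma}$ sends this spanning set bijectively onto the basis, hence is an isomorphism. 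So no Diamond-Lemma computation is really needed — the PBW property is handed to us — but one must be careful that the $R$-bimodule structure on $V$ is the constant one (left and right actions of $R$ agree on the $z_i$), which is exactly what constancy of $A$ guarantees and is the reason the hypothesis "$R$-augmented" (not merely quasi-commutative) is needed. The remaining checks — gradedness of the presentation, finite-dimensionality of $V$ over $R$, and $A_+=\bigoplus_{n>0}A_n$ — are routine bookkeeping with the filtration \eqref{eq1.3.1a} and Proposition \ref{prop.augm skew pbw}.
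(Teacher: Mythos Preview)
Your proposal is correct and follows essentially the same route as the paper: invoke Proposition~\ref{prop.augm skew pbw} to get constancy, combine with quasi-commutativity to reduce the relations to $z_ir=rz_i$ and $z_jz_i=c_{i,j}z_iz_j$, set $V=\bigoplus_i Rz_i$, and present $Gr(A)\cong A^{\sigma}\cong A$ as $T_R(V)/\langle z_jz_i-c_{i,j}z_iz_j\rangle$. You are in fact more careful than the paper on the point you call the ``main obstacle'' (the paper simply asserts the presentation without arguing that the quadratic relations generate the full kernel), though note a small slip: constancy of $A$ does not force $c_{i,j}\in\mathbb{K}^{*}$, only $c_{i,j}\in R\setminus\{0\}$, but this does not affect your argument.
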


\begin{proof}
Let $A$ be an $R$-augmented skew PBW extension then by Proposition \ref{prop.augm skew pbw}, $A$ is pre-commutative and constant. Additionally, since $A$ is quasi-commutative then in Definition \ref{def.skewpbwextensions}, the condition {\rm (iii)} is  replaced by $x_ir=rx_i$, and condition {\rm (iv)} is replaced by $x_jx_i-c_{i,j}x_ix_j= 0$,  for  $c_{i,j}\in R \ \backslash \ \{0\}$ and $1\leq i,j\leq n$; i.e.,  $A = R\langle x_1,\dots x_n\rangle/\langle x_jx_i-c_{i,j}x_ix_j, 1\leq i,j\leq n, c_{i,j}\in R \ \backslash \ \{0\} \rangle$, is a graded algebra with
$A = \bigoplus_{p\geq 0} A_p$, where
\begin{equation}\label{eq.grad mod skew}
A_p:=\begin{cases} R & {\rm if}\ \ p=0\\ \{f\in A\mid  f \text{  is homogeneous of degree } p\} & {\rm if}\ \ p\ge 1.
\end{cases}
\end{equation}
 By Proposition \ref{remarkAsigma} and Theorem \ref{teo.Gr(A)}, there exists a quasicommutative
skew PBW extension $A^{\sigma}\cong Gr(A) \cong A$ of $R$ in $n$ variables $z_1,\dotsc, z_n$ defined by the relations  $z_ir= rz_i$, $z_jz_i=c_{i,j}z_iz_j$, for $1\le i\le n$. For $V$ the $R$-bimodule generated by $\{z_1,\dots z_n\}$ and $I=\langle z_jz_i-c_{i,j}z_iz_j \rangle$,  $T_R(V)\cong R\langle z_1,\dotsc, z_n\rangle  \cong R\langle x_1,\dotsc, x_n\rangle$, then $A= T_R(V)/I$.  Note that $I\subset  \sum_{s\geq 2} V^{\otimes s}$,  $A_p =  V^{\otimes p}$ mod $I$, and $A_+ = \bigoplus_{p>0} A_p$.
\end{proof}

\begin{remark}
Note that if $A$ is an $R$-augmented ($A$ is constant) and quasi-commutative skew PBW extension, then $A$ is semi-commutative.
\end{remark}

Let  $A$ be a connected algebra, finitely generated in degree 1. Let $E(A) = \bigoplus E^{n,m}(A)$ $= 
\bigoplus E^{n,m}_A (\mathbb{K}, \mathbb{K})$ be the associated bigraded Yoneda algebra of $A$ (where $n$ is the cohomology degree and $-m$ is the internal degree inherited from the grading on $A$). For an $R$-augmented algebra $A$, the algebra $E(A) = Ext_A(R, R)$
is called the Yoneda algebra. Set $E^n(A) = \bigoplus_m E^{n,m}(A)$. Then $A$ is said to be Koszul if
it satisfies either of the following (equivalent) definitions:
\begin{enumerate}
\item[\rm(i)] $E^{n,m}(A) = 0$ unless $n = m$.
\item[\rm(ii)] $E(A)$ is generated as an algebra by $E^{1,1}(A)$.
\item[\rm(iii)] The module $\mathbb{K}$ admits a \emph{linear free resolution}, i.e., a resolution by free $A$-modules
\[
\cdots \to P^2\to P^1\to P^0\to \mathbb{K}\to 0,
\]
such that $P_i$ is generated in degree $i$.
\end{enumerate}
Cassidy and Shelton in \cite{Cassidy2008} gave other generalization of Koszul algebras which are called $\mathcal{K}_2$ algebras.  The connected graded algebra $A$ is said to be $\mathcal{K}_2$ if $E(A)$ is generated as an algebra by $E^1(A)$ and $E^2(A)$. They shown that the algebra $A$ is Koszul if and only if it is quadratic homogeneous and  $\mathcal{K}_2$ (see Corollary 4.6). Note that if $A$ is a skew PBW extension and $A$ is  a graded algebra, then $A$ is homogeneous quadratic. Therefore, $\mathcal{K}_2$ algebras and Koszul algebras are the same for  skew PBW extensions.  Phan in \cite{Phan1} generalizes the definition of $\mathcal{K}_2$ algebras and Koszul algebras for augmented and $R$-augmented  algebras.
\begin{definition}\label{def.augm Koszul}
An augmented $\mathbb{K}$-algebra $A$ is $\mathcal{K}_2$ if $E^1(A)$ and $E^2(A)$ generate $E(A)$ as a $\mathbb{K}$-algebra. The $R$-augmented algebra $A$ is $\mathcal{K}_m$  if $E(A)$ is generated as an $R$-algebra by $E^1(A),\dots, E^m(A)$. If $A$ has  a grading compatible with its augmentation and $A$ is
$\mathcal{K}_1$, then $A$ is called $R$-\emph{augmented Koszul}. 
\end{definition}

\begin{theorem}[\cite{Phan2}, Theorem II.3.4]\label{teo.Koszul iff resol} $A$ is $R$-augmented Koszul  if and only if there exists a projective resolution $P^{\bullet}$ of graded $A$-modules for $R$ as  $A$-module, such that $P^n$ is generated in degree $n$.
\end{theorem}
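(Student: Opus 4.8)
The plan is to measure both sides against a minimal graded projective resolution of ${}_{A}R$, and to bridge ``$A$ admits a linear resolution of $R$'' with ``$A$ is $\mathcal{K}_{1}$'' through the pairing between $\mathrm{Tor}^{A}_{\bullet}(R,R)$ and the Yoneda algebra $E(A)=\mathrm{Ext}^{\bullet}_{A}(R,R)$. Since a grading compatible with the augmentation is part of the hypothesis, we may write $A=T_{R}(V)/I$ with $A_{0}=R$, with $A_{+}=\bigoplus_{n>0}A_{n}$ generated in degree $1$, and with ${}_{A}R=A/A_{+}$ concentrated in degree $0$. First I would record the standard facts about the minimal graded projective resolution $Q^{\bullet}\to R$: it is unique up to graded isomorphism and a graded direct summand of every graded projective resolution; minimality means $\mathrm{im}(Q^{n}\to Q^{n-1})\subseteq A_{+}Q^{n-1}$; a routine induction on the syzygies $\Omega^{n}(R)$ then gives $\Omega^{n}(R)\subseteq A_{+}Q^{n-1}$ with no homogeneous component below degree $n$, so that $Q^{n}$ is generated in degrees $\ge n$; and, again by minimality, the complex $\mathrm{Hom}_{A}(Q^{\bullet},R)$ has zero differential, so with local finiteness in force one obtains the dictionary: $E^{n,m}(A)\neq 0$ if and only if $Q^{n}$ has a minimal generator in degree $m$, if and only if $\mathrm{Tor}^{A}_{n}(R,R)_{m}\neq 0$.

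For the implication ``$R$-augmented Koszul $\Rightarrow$ linear resolution'': if $A$ is $\mathcal{K}_{1}$ then $E(A)$ is generated over $R$ by $E^{1}(A)$; since $A$ is generated in degree $1$ we have $\mathrm{Tor}^{A}_{1}(R,R)=A_{+}/A_{+}^{2}=A_{1}$, hence $E^{1}(A)=E^{1,1}(A)$, and because the Yoneda product adds one to the cohomological degree and one to the internal degree at each factor, every element of $E^{n}(A)$ lies in $E^{n,n}(A)$. So $E^{n,m}(A)=0$ for $m\neq n$, and by the dictionary $Q^{n}$ is generated purely in degree $n$ for all $n$; thus $P^{\bullet}:=Q^{\bullet}$ is the sought resolution.

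For the converse, suppose $P^{\bullet}\to R$ is a projective resolution by graded $A$-modules with $P^{n}$ generated in degree $n$. As $Q^{n}$ is a graded summand of $P^{n}$ and $P^{n}/A_{+}P^{n}$ is concentrated in degree $n$, so is $Q^{n}/A_{+}Q^{n}$; that is, $\mathrm{Tor}^{A}_{n}(R,R)$ is concentrated in internal degree $n$ for every $n$. Now I would run the reduced bar complex $B_{\bullet}=\bar{A}^{\otimes_{R}\bullet}$ with $\bar{A}=A_{+}$, which (under the finiteness and $R$-flatness hypotheses on $A$) computes $\mathrm{Tor}^{A}_{\bullet}(R,R)$ together with its deconcatenation coalgebra structure; its part of internal degree $n$ is a finite complex, with all tensor products over $R$, of the shape $0\to A_{1}^{\otimes n}\to\bigoplus_{i=1}^{n-1}A_{1}^{\otimes(i-1)}\otimes A_{2}\otimes A_{1}^{\otimes(n-1-i)}\to\cdots\to A_{n}\to 0$ sitting in homological degrees $n,n-1,\dots,1$, and the hypothesis forces this complex to have homology only in its top spot. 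Therefore $\mathrm{Tor}^{A}_{n}(R,R)=\ker\bigl(A_{1}^{\otimes n}\to\bigoplus_{i=1}^{n-1}A_{1}^{\otimes(i-1)}\otimes A_{2}\otimes A_{1}^{\otimes(n-1-i)}\bigr)$, and the iterated comultiplication $\mathrm{Tor}^{A}_{n}(R,R)\to\mathrm{Tor}^{A}_{1}(R,R)^{\otimes n}=A_{1}^{\otimes n}$ is exactly this inclusion, hence injective. Dualising (valid by local finiteness, the transpose of the bar comultiplication being the Yoneda product) shows the iterated multiplication $E^{1}(A)^{\otimes n}\to E^{n}(A)$ is onto for all $n$, i.e. $E(A)$ is generated over $R$ by $E^{1}(A)$; together with the compatible grading furnished by hypothesis this says $A$ is $\mathcal{K}_{1}$, hence $R$-augmented Koszul. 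In the skew PBW application one may alternatively invoke that a graded skew PBW extension is automatically homogeneous quadratic and run the quadratic Koszul complex in place of the bar complex.

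I expect the one genuinely substantive point to be the converse: verifying, in the $R$-relative rather than field-based setting, that the reduced bar complex computes $\mathrm{Tor}^{A}_{\bullet}(R,R)$ with its coalgebra structure and that the $\mathrm{Tor}$--$\mathrm{Ext}$ duality interchanges the deconcatenation coproduct with the Yoneda product. Both facts hinge on the local finiteness of $A$ and on the flatness over $R$ that is built into the compatible-grading hypothesis $A=T_{R}(V)/I$ (which is automatic for skew PBW extensions, as they are $R$-free); everything else is routine Koszul bookkeeping.
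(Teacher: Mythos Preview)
The paper does not prove this theorem at all: it is quoted verbatim from Phan's dissertation \cite{Phan2} and used as a black box in the subsequent argument. There is therefore no ``paper's own proof'' to compare your proposal against.

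That said, your sketch is the standard route one takes to prove such a statement in the $R$-relative setting, and it is broadly sound. The forward direction is clean: once you know $E^{1}(A)=E^{1,1}(A)$ and that Yoneda products are bi-additive in both degrees, $\mathcal{K}_{1}$ forces $E^{n,m}(A)=0$ for $m\neq n$, and the minimal-resolution dictionary then pins every $Q^{n}$ to degree $n$. For the converse you correctly identify the only delicate point: in the $R$-augmented (as opposed to connected) setting one must check that a minimal graded projective resolution of ${}_{A}R$ exists with the usual properties, and that the reduced bar construction over $R$ genuinely computes $\mathrm{Tor}^{A}_{\bullet}(R,R)$ together with a comultiplication dual to the Yoneda product. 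Both of these hinge on $A$ being projective (or at least flat) over $R$ and locally finite, which the compatible-grading hypothesis $A=T_{R}(V)/I$ with $V$ a finite-dimensional $R$-bimodule is designed to supply; in Phan's thesis $R$ is moreover taken semisimple, which makes the existence of minimal resolutions and the $\mathrm{Tor}$--$\mathrm{Ext}$ duality entirely routine. Your caveat about the skew PBW case being automatically $R$-free is apt and would let one bypass some of these verifications in the applications the paper has in mind.
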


Let $A=\sigma(R)\langle x_1,\dots, x_n\rangle$ be a skew PBW extension. We recall that each element $a\in A\ \backslash\ \{0\}$ has a unique representation as $a= c_0 + c_1X_1+\cdots+c_tX_t$, with $c_i\in R\ \backslash\ \{0\}$ and $X_i\in {\rm Mon}(A)\ \backslash\ \{1\}$ for $1\leq i\leq t$ and $c_0 \in R$ (\cite{LezamaGallego}, Remark 2).

\begin{proposition}\label{prop.homo}
Let $A=\sigma(R)\langle x_1,\dots, x_n\rangle$ be a skew PBW extension and let $f:  A\to R$ with $f(a):=c_0$, where $a$ is as above. Then $f$ is a ring homomorphism if and only if $A$ is pre-commutative and of endomorphism type.
\end{proposition}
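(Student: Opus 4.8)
The plan is to use the direct-sum decomposition $A=R\cdot 1\oplus M$ of left $R$-modules, where $M$ denotes the $R$-span of ${\rm Mon}(A)\setminus\{1\}$. Then $M=\ker f$, and $f$ is automatically additive with $f(1)=1$. Since $R\hookrightarrow A$ is a ring embedding with $A=R\oplus M$, whenever $M$ is a two-sided ideal the composite $A\twoheadrightarrow A/M\xrightarrow{\ \sim\ }R$ is a ring homomorphism, and it visibly coincides with $f$; conversely the kernel of any ring homomorphism is a two-sided ideal. Hence $f$ is a ring homomorphism if and only if $M$ is a two-sided ideal of $A$, and the statement reduces to showing that this holds precisely when $A$ is pre-commutative and of endomorphism type.

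For the implication "$f$ a ring homomorphism $\Rightarrow$ pre-commutative and of endomorphism type" I would substitute suitable elements into the multiplicativity of $f$. For $r\in R$ and $1\le i\le n$, Proposition \ref{sigmadefinition} gives $x_ir=\sigma_i(r)x_i+\delta_i(r)$, so the standard representation of $x_ir$ has constant term $\delta_i(r)$; therefore $\delta_i(r)=f(x_ir)=f(x_i)f(r)=0$ for all $i$ and all $r$, i.e.\ $A$ is of endomorphism type. For $1\le i<j\le n$, condition (iv) of Definition \ref{def.skewpbwextensions} lets us write $x_jx_i=c_{i,j}x_ix_j+r_0+r_1x_1+\cdots+r_nx_n$ with $r_0,\dots,r_n\in R$; since $i<j$ the monomial $x_ix_j$ lies in ${\rm Mon}(A)$, so this is already a standard representation with constant term $r_0$, whence $r_0=f(x_jx_i)=f(x_j)f(x_i)=0$ and $x_jx_i-c_{i,j}x_ix_j\in Rx_1+\cdots+Rx_n$. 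Thus $A$ is pre-commutative.

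For the converse, assume $A$ is pre-commutative and of endomorphism type. Then in $A$ the relations $x_ir=\sigma_i(r)x_i$ ($r\in R$, from endomorphism type) and $x_jx_i=c_{i,j}x_ix_j+\sum_{k}d^{k}_{ij}x_k$ ($i<j$, for suitable $d^{k}_{ij}\in R$, from pre-commutativity) hold. These suffice to straighten every element of the free $R$-ring $R\langle x_1,\dots,x_n\rangle$ into a left $R$-linear combination of standard monomials, and since ${\rm Mon}(A)$ is an $R$-basis of $A$, the canonical surjection $R\langle x_1,\dots,x_n\rangle\to A$ induces an isomorphism $A\cong R\langle x_1,\dots,x_n\rangle/J$, with $J$ the two-sided ideal generated by those relations. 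The crucial observation is that every generator of $J$ lies inside the two-sided ideal $I_0:=\langle x_1,\dots,x_n\rangle$ of $R\langle x_1,\dots,x_n\rangle$: this is exactly what the two hypotheses provide, endomorphism type deleting the constant term $\delta_i(r)$ and pre-commutativity deleting the constant term in condition (iv). Consequently, if $I$ denotes the two-sided ideal of $A$ generated by $x_1,\dots,x_n$, then
\[
A/I\;\cong\;R\langle x_1,\dots,x_n\rangle/(J+I_0)\;=\;R\langle x_1,\dots,x_n\rangle/I_0\;\cong\;R,
\]
and the associated surjection $A\twoheadrightarrow A/I\xrightarrow{\ \sim\ }R$ is the identity on $R$ and annihilates every monomial of positive degree, hence equals $f$; so $f$ is a ring homomorphism.

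The main obstacle is this converse direction, and more precisely the identity $I=M$ (equivalently $I\cap R=0$): one must know that straightening a product of monomials of positive degree never produces a nonzero constant term. In the argument above this comes for free once one checks $J\subseteq I_0$, so no separate work is needed; but if a self-contained proof is preferred it can be carried out by a double induction on (length of the word, number of inversions), where rewriting a single adjacent inversion $x_jx_i=c_{i,j}x_ix_j+\sum_k d^{k}_{ij}x_k$ produces one word of the same length with one fewer inversion together with strictly shorter correction words, endomorphism type guaranteeing that pulling the coefficients $c_{i,j},d^{k}_{ij}$ to the left creates no further lower-order terms and pre-commutativity guaranteeing that none of the correction terms is a constant.
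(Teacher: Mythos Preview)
Your proof is correct. The forward implication is essentially the paper's argument: both substitute $x_ir$ and $x_jx_i$ into $f(uv)=f(u)f(v)$ and read off $\delta_i(r)=0$ and $r_0=0$. One minor remark: Definition~\ref{sigmapbwderivationtype}(b) states pre-commutativity for \emph{all} pairs $1\le i,j\le n$, whereas you treat only $i<j$; the remaining cases come for free by noting $f(c_{i,j}x_ix_j)=c_{i,j}f(x_i)f(x_j)=0$ regardless of the order of $i$ and $j$, which is how the paper phrases it.

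The converse is where the two arguments genuinely diverge. The paper expands the product of two generic elements $a=a_0+\sum a_kX_k$ and $b=b_0+\sum b_lY_l$ directly and asserts that the constant term of $ab$ is $a_0b_0$; the hypotheses enter implicitly, guaranteeing that each cross term $a_kX_k\cdot b_lY_l$ straightens to a combination of monomials of positive degree. Your route is more structural: you recast the question as whether $M=\ker f$ is a two-sided ideal, then present $A$ as $R\langle x_1,\dots,x_n\rangle/J$ and observe that, under the two hypotheses, every generator of $J$ already lies in $I_0=\langle x_1,\dots,x_n\rangle$, whence $A/I\cong R$ and the quotient map is exactly $f$. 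This makes the role of each hypothesis completely transparent---endomorphism type kills the constant $\delta_i(r)$ in one family of relations, pre-commutativity kills it in the other---and sidesteps the bookkeeping of the direct expansion; the paper's computation is shorter on the page but leaves the ``no constant term appears when straightening $X_kY_l$'' step to the reader. Your final paragraph, sketching the double induction on word length and number of inversions, is precisely a rigorous unpacking of that step.
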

\begin{proof}
$\Leftarrow)$:  $f$ is well defined since the representation of $a$ is unique. Is clear that $f(1)=1$, and $f(a+b)= f(a)+f(b)$ for all $a,b\in A$. Let $a= a_0 + a_1X_1+\cdots+a_tX_t$ and $b= b_0 + b_1Y_1+\cdots + b_sY_s$ the unique representation of $a, b\in A$. As $A$ is of derivation type and pre-commutative then
\begin{multline*}
ab=(a_0 + a_1X_1+\cdots +a_tX_t)(b_0 + b_1Y_1+\cdots + b_sY_s)\\
= a_0b_0+a_0b_1Y_1+\cdots a_0b_sY_s+\cdots + a_1X_1 b_0 + a_1X_1b_1Y_1\\
+\cdots + a_1X_1b_sY_s+\cdots + a_tX_tb_0+\cdots + a_tX_tb_sY_s\\
= a_0b_0+a_0b_1Y_1+\cdots a_0b_sY_s+\cdots + a_tX_tb_0+\cdots + c_{1,0}X_1 + c_{1,1}X_1Y_1\\
+\cdots + c_{1,s}X_1Y_s+\cdots + c_{t,0}X_t+\cdots + c_{t,s}X_tY_s\\=a_0b_0 + d_1Z_1+\cdots + d_nZ_n \text{ for } c_{i,j}\in R, d_k \in R\ \backslash\ \{0\} \text{ and } Z_i\in {\rm Mon}(A)\ \backslash\ \{1\}.
\end{multline*}
Then $f(ab)=a_0b_0=f(a)f(b)$.\\
$\Rightarrow)$: Let $r\in R\ \backslash\ \{0\}$ then for each $1\leq i\leq n$, $x_ir=\sigma_i(r)x_i+\delta_i(r)$. So, $f(x_ir)=f(\sigma_i(r)+\delta_i(r))=\delta_i(r)$. On the other hand $f(x_ir)=f(x_i)f(r)=0$. Then $\delta_i(r)=0$, and therefore $A$ is of endomorphism type. Now, $f(x_jx_i)=f(c_{i,j}x_ix_j+c_0+c_1x_1+\cdots + c_nx_n)=c_0= f(x_i)f(x_j)=0$, for each $1\leq i,j\leq n$ and $c_0, c_1,\dots, c_n \in R$. Then $A$ is pre-commutative.
\end{proof}
We have immediately  the following property for modules over skew PBW extensions.

\begin{corollary}\label{cor.sub estr}
If $A=\sigma(R)\langle x_1,\dots, x_n\rangle$ is a pre-commutative skew PBW extension of endomorphism type then  every $R$-module has the $A$-module structure.
\end{corollary}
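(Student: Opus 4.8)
The plan is to obtain the $A$-module structure by restriction of scalars along the ring homomorphism $f\colon A\to R$ supplied by Proposition \ref{prop.homo}. Since $A=\sigma(R)\langle x_1,\dots,x_n\rangle$ is pre-commutative and of endomorphism type, that proposition tells us that the assignment $f(a):=c_0$, where $a=c_0+c_1X_1+\cdots+c_tX_t$ is the canonical expansion of $a$, is a ring homomorphism. First I would record the two elementary facts about $f$ that make the construction work: $f$ is unital (the canonical expansion of $1$ has independent term $1$), and $f$ restricts to the identity on $R\subseteq A$ (an element $r\in R$ has canonical expansion $r=r$, so $c_0=r$); in particular $f$ is a surjection splitting the inclusion $R\hookrightarrow A$.

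Now let $M$ be any $R$-module. I would define an $A$-action on $M$ by $a\cdot m:=f(a)\,m$ for $a\in A$ and $m\in M$. That this is an $A$-module structure is immediate from the fact that $f$ is a unital ring homomorphism: $(ab)\cdot m=f(ab)m=f(a)f(b)m=a\cdot(b\cdot m)$, $1\cdot m=f(1)m=m$, and additivity in each variable is inherited from additivity of $f$ and of the $R$-action. Because $f|_R={\rm id}_R$, for $r\in R$ one has $r\cdot m=f(r)m=rm$, so the constructed $A$-action genuinely extends the given $R$-action; this is the sense in which every $R$-module acquires an $A$-module structure. The same argument applies verbatim whether $M$ is a left or a right $R$-module.

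There is no real obstacle here: the whole substantive content is packaged in Proposition \ref{prop.homo}, and the remainder is the standard pullback of a module along a ring homomorphism. The only point deserving a line of care is to verify unitality of $f$ and its compatibility with the embedding $R\subseteq A$, so that one is refining rather than discarding the original scalar action; everything else is routine verification of the module axioms.
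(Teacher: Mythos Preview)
Your proof is correct and follows exactly the paper's approach: the paper's proof simply writes the formula $a\cdot m:=a_0m$ for $a=a_0+a_1X_1+\cdots+a_tX_t$, which is precisely the restriction of scalars along the homomorphism $f$ of Proposition~\ref{prop.homo} that you describe. Your write-up is in fact more explicit about why this defines an $A$-module extending the $R$-action, but the idea is identical.
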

\begin{proof}
Let $M$ be a $R$-module then for  $a= a_0 + a_1X_1+\cdots+a_tX_t\in A$  and $m\in M$, $a\cdot m:=a_0m$.
\end{proof}

\begin{remark} Let $f$ as in Proposition \ref{prop.homo}. In general $f$ is  $R$-homomorphism for all skew PBW extension $A$ of $R$. $f$ is surjective as $R$-homomorphism  and as ring homomorphism.
\end{remark}

In the next theorem, we study the Koszul property for $R$-augmented algebras, i.e., in the  sense of the Definition \ref{def.augm Koszul}.
\begin{theorem}
Let $R$ be a commutative augmented Koszul $\mathbb{K}$-algebra. If $A$ is a  semi-commutative skew PBW extension of $R$ then $A$ is $R$-augmented Koszul.
\end{theorem}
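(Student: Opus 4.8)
The plan is to verify, in turn, the three ingredients hidden in Definition \ref{def.augm Koszul}: that $A$ is $R$-augmented, that it carries a grading compatible with the augmentation, and that it is $\mathcal{K}_1$; for the last point I would use the resolution criterion of Theorem \ref{teo.Koszul iff resol}, which reduces matters to exhibiting one suitable resolution.

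The first two points are already within reach. As $A$ is semi-commutative it is quasi-commutative and constant, hence in particular pre-commutative and constant, so Proposition \ref{prop.augm skew pbw} gives that $A$ is $R$-augmented with $A=A_+\bigoplus R$ and $A_+=\langle x_1,\dots,x_n\rangle$; moreover, being quasi-commutative (so all $c_{i,j}$ are invertible) and constant (so all $\sigma_i=\mathrm{id}_R$), $A$ is in fact bijective. Since $A$ is $R$-augmented and quasi-commutative, Proposition \ref{prop.compatible} applies, and since $A$ is constant and quasi-commutative it coincides with its associated graded ring, so $A$ itself is graded with $A_0=R$, $A_1=\bigoplus_{i=1}^{n}Rx_i$ and $A_p=\bigoplus_{|\alpha|=p}Rx^{\alpha}$, this grading being compatible with the augmentation and presenting $A$ as $T_R(V)/I$ with $V=A_1$ and $I=\langle x_jx_i-c_{i,j}x_ix_j\rangle\subseteq V^{\otimes 2}$. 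By Theorem \ref{teo.Koszul iff resol} it therefore suffices to build a graded projective resolution $P^{\bullet}$ of the $A$-module $R=A/A_+$ with $P^{n}$ generated in degree $n$.

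For this I would use the Koszul complex on $x_1,\dots,x_n$. Because $A$ is constant and quasi-commutative with the $c_{i,j}$ invertible, it is an iterated skew polynomial extension $A=R[x_1;\theta_1]\cdots[x_n;\theta_n]$ of endomorphism type over $R$, where each $\theta_k$ fixes $R$ pointwise and scales $x_i\mapsto c_{i,k}x_i$ for $i<k$; since the $\theta_k$ are bijective and the $x^{\alpha}$ form an $R$-basis (Proposition \ref{sigmadefinition}), each $x_k$ is a normal non-zero-divisor and, modulo $\langle x_1,\dots,x_{k-1}\rangle$, the quotient is $R[x_k]$, in which $x_k$ is again regular. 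Splicing the length-one complexes $0\to A\xrightarrow{\cdot x_k}A\to A/\langle x_1,\dots,x_k\rangle\to 0$ by iterated mapping cones — twisting the exterior differential by the $c_{i,j}$ so that the connecting maps become chain maps — should produce an exact complex of graded free $A$-modules
\begin{equation*}
\cdots\longrightarrow A\otimes_R\Lambda^{2}V\longrightarrow A\otimes_R V\longrightarrow A\longrightarrow R\longrightarrow 0,
\end{equation*}
where $\Lambda^{m}V$ is the exterior power of $V=A_1$ twisted by the commutation constants $c_{i,j}$, a free $R$-module of rank $\binom{n}{m}$ concentrated in degree $m$. Then $P^{m}:=A\otimes_R\Lambda^{m}V$ is a graded free $A$-module generated in degree $m$ and the differentials are homogeneous of degree $0$, giving a linear $A$-resolution of $R$; by Theorem \ref{teo.Koszul iff resol}, $A$ is $R$-augmented Koszul. (The hypothesis that $R$ be augmented Koszul mainly supplies the ambient structure on $R$ — connectedness, $R_0=\mathbb{K}$, a finite quadratic presentation — that makes $A$ a locally finite graded skew PBW extension and places us inside Phan's framework; the genuinely extension-theoretic input is semi-commutativity, which is what makes the Koszul complex on the $x_i$ available and linear, and one may also use, as in Theorem \ref{result.princ}, that $R$-linear data can be transported to $A$ since $A$ is free over $R$.)

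I expect the technical heart to be the exactness of this twisted Koszul complex: one must check that $x_1,\dots,x_n$ genuinely behave as a normalizing regular sequence — which is exactly where constancy is used, so each $x_i$ commutes with $R$, together with quasi-commutativity, so the $x_i$ only $c$-commute among themselves with no lower-order correction terms — and that the twisting $2$-cocycle assembled from the invertible constants $c_{i,j}$ is consistent, so that the iterated mapping cones glue into a single degree-preserving complex. Once that structural fact is secured, linearity is automatic, since every generator $x_i$ sits in degree $1$.
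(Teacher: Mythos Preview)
Your argument is sound in outline but takes a genuinely different route from the paper. The paper does not build the Koszul complex on $x_1,\dots,x_n$; instead it reuses the template of Theorem \ref{result.princ}: starting from the linear projective resolution $P^{\bullet}\to\mathbb{K}\to 0$ of $\mathbb{K}$ over $R$ furnished by the hypothesis that $R$ is augmented Koszul, it applies $-\otimes_R A$ (exact because $A$ is $R$-free), argues that $\mathbb{K}\otimes_R A\cong R$ as an $A$-module, and reads off a linear projective $A$-resolution of $R$. Thus the paper uses the Koszulity of $R$ in an essential way, whereas your argument does not --- your twisted Koszul complex on the normalizing regular sequence $x_1,\dots,x_n$ resolves $R=A/A_+$ linearly over $A$ for \emph{any} commutative $R$ sitting in degree $0$, which is a sharper conclusion. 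The paper's approach, by contrast, is the same tensor-up mechanism as in Theorem \ref{result.princ}, so it provides a uniform device for transporting linear resolutions along $R\hookrightarrow A$.

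One small slip to fix: quasi-commutativity alone does not force the $c_{i,j}$ to be invertible --- that is the separate \emph{bijective} hypothesis of Definition \ref{sigmapbwderivationtype}(d), and semi-commutative does not include it. Fortunately your argument does not actually need invertibility: constancy gives $x_ir=rx_i$, quasi-commutativity gives $x_jx_i=c_{i,j}x_ix_j$, so each $x_k$ is normal, and the sequence $x_1,\dots,x_n$ is left- and right-regular by the PBW basis of Definition \ref{def.skewpbwextensions}(ii). That already suffices for the twisted Koszul complex to be exact and linear; just drop the parenthetical about bijectivity and the appeal to invertibility of the $c_{i,j}$.
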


\begin{proof}
 Note that $R$ is an augmented $\mathbb{K}$-algebra, i.e., there is an ideal $R_+ \subset R$ such that $R= R_+ \bigoplus \mathbb{K} \cdot 1$. The augmentation is then $\varepsilon_R: R\to \mathbb{K}$; $R$ has a grading compatible with its augmentation, i.e., a grading $R= \bigoplus_{n\geq 0}R_n$ such that  $R = T(V)/I$ for some finite-dimensional $\mathbb{K}$-bimodule $V$ and finitely-generated homogeneous ideal $I \subset \sum_{i\geq 2} V^{\otimes i}$,  $R_n = V^{\otimes n}$ mod $I$, and $R_+ = \bigoplus_{n>0} R_n$. By Theorem \ref{teo.Koszul iff resol}, there exists a projective resolution
 \begin{equation}\label{resol.K}
\dotsb \xrightarrow{f_{i+1}} P^{i} \xrightarrow{f_i} \dotsb \xrightarrow{f_2} P^{1} \xrightarrow{f_1} P^{0} \xrightarrow{f_0} \mathbb{K} \to 0,
\end{equation}
of graded $R$-modules for ${_R\mathbb{K}}$ such that $P^n$ is generated in degree $n$. Since $A$ is a pre-commutative and constant skew PBW extension of $R$, then
 $\sigma_i=i_R$ and $\delta_i=0$ where $\sigma_i$ and $\delta_i$  are as in Proposition \ref{sigmadefinition}. So $A$ is of endomorphism type and of derivation type. By Proposition \ref{prop.augm skew pbw} $A$ is an $R$-augmented algebra, i.e., $A=A_+ \bigoplus R $ for some ideal $A_+$ of $A$.  By Corollary \ref{cor.sub estr} $R$ and $\mathbb{K}$ have the left and right $A$-module structure. Since every semi-commutative skew PBW extension is quasi-commutative, then by Proposition \ref{prop.compatible} the graduation $Gr(A)$ from Proposition \ref{teo.Gr(A)} is compatible with the augmentation of $A\cong A\otimes_R R$.  Since $R$ is commutative and $P_i$ is a left $R$-module then by  Corollary \ref{cor.sub estr} $P^i$ is a left and right $A$-module. So, $P^i\otimes_RA$ is a left projective $A$-module. Since $\mathbb{K}\cong R/R_+$ then by \cite{Rotman2009}, Proposition 2.68,  we have that $\mathbb{K}\otimes_R A\cong R/R_+\otimes_R A\cong A/R_+A\cong (A_+ \bigoplus R )/A_+\cong R$. Note that
 \begin{equation}\label{resol.A}
\dotsb \xrightarrow{f_{i+1}\otimes\iota_A}  P^{i}\otimes_R A  \xrightarrow{f_{i}\otimes\iota_A} \dotsb \xrightarrow{f_{2}\otimes\iota_A} P^{1}\otimes_R A \xrightarrow{f_{1}\otimes\iota_A} P^{0}\otimes_R A \xrightarrow{f_{0}\otimes\iota_A} \mathbb{K}\otimes_R A \cong R\to 0
\end{equation}
is a graded projective resolution of $R$ as a graded  $A$-module, where $P^n\otimes_R A$ is generated in degree $n$ (see proof of Theorem \ref{result.princ}).
Then, by Theorem \ref{teo.Koszul iff resol} we have  that $A$ is $R$-augmented Koszul.
\end{proof}


\begin{thebibliography}{30}


\bibitem{Backelin}  Backelin, J., Fr\"oberg, R., Koszul algebras, Veronese subrings and rings with
linear resolutions, {\em Rev. Roumaine Math. Pures Appl.} 30(2) (1985) 85-97.

\bibitem{BeilinsonGinzburgSoerge1996}Beilinson, A., Ginzburg, V., Soergel, W., Koszul duality patterns in representation theory, {\em J. Amer. Math. Soc}. 9 (1996)  473-527.

\bibitem{Cassidy2008} Cassidy, T.,  Shelton, B., Generalizing the notion of Koszul algebra,
 {\em Math. Z}. 260 (2008) 93-114 .

\bibitem{LezamaGallego} Gallego, C., Lezama, O., Gr\"obner bases for ideals of $\sigma$-PBW extensions, {\em Comm.  Algebra} 39(1) (2011) 50-75.

\bibitem{GreenReitenSolberg2002}Green, E.L., Reiten, I., Solberg, O., Dualities on generalized Koszul Algebras, {\em Mem.
Amer. Math. Soc}. 159 (2002), xvi+67pp;


\bibitem{LezamaAcostaReyes2015} Lezama O., Acosta J. P., Reyes A., Prime ideals of skew PBW extensions, {\em Rev. Un. Mat. Argentina} 56(2) (2015) 39-55.


\bibitem{LezamaReyes}Lezama, O., Reyes, A., Some Homological Properties of Skew PBW Extensions, {\em  Comm. Algebra} 42 (2014) 1200-1230.

\bibitem{Li5}Li, L., A generalized Koszul theory and its relation to the classical theory, {\em J. Algebra} 420 (2014) 217-241.

\bibitem{NastaVO}Nastasescu, C., Van Oystaeyen, F., {\em Methods of Graded Rings}, Lecture Notes in Mathematics, 1836, Springer-Verlag, Berlin, 2004.

\bibitem{Phan1} Phan, C.,  Generalized Koszul properties for augmented algebras, {\em J. Algebra}
321 (2009) 1522-1537.

\bibitem{Phan2} Phan, C., Koszul and Generalized Koszul Properties for Noncommutative Graded Algebras. Dissertation  to Obtain the Degree of Doctor of Philosophy, University of Oregon (2009).

 \bibitem{Phan} Phan, C.,  The Yoneda algebra of a graded Ore extension, {\em Comm. Algebra} 40 (2012) 834-844.

\bibitem{Polishchuk} Polishchuk, A., Positselski, C., {\em Quadratic Algebras}, Univ. Lecture Ser., vol.37, Amer. Math. Soc., Providence, RI (2005).

\bibitem{Priddy1970}Priddy, S., Koszul resolutions, {\em Trans. Amer. Math. Soc}. 152 (1970) 39-60.

\bibitem{Reyes2013}Reyes, A., Gelfand-Kirillov dimension of skew PBW  extensions, {\em Rev. Col. Mat.} 47(1) (2013), 95-111.

\bibitem{Reyes2014}Reyes, A., Uniform dimension over skew PBW extensions, {\em Rev. Col. Mat.} 48(1) (2014) 79-96.

\bibitem{Reyes2015}Reyes, A., Skew PBW extensions of Baer, quasi-Baer, p.p. and p.q.-rings,{\em  Rev. Integr. Temas Mat.} 33(2) (2015),  173-189.

\bibitem{ReyesSuarez2016a}Reyes, A.,  Su\'arez, H., A note on zip and reversible skew PBW extensions, {\em Bol. Mat. (N.S.)} 23(1) (2016), 71-79.

\bibitem{Rotman2009}Rotman, J., {\em An Introduction to Homological Algebra}, Second Edition, Springer, 2009.

\bibitem{Rowen2008}Rowen, L. H., {\em Graduate algebra: Noncommutative view}. Graduate Studies in Mathematics, 91. American Mathematical Society, Providence, RI, 2008.

\bibitem{SuarezLezamaReyes2015}Su\'arez, H., Lezama, O., Reyes, A., Some Relations between $N$-Koszul, Artin-Schelter Regular and Calabi-Yau Algebras with Skew PBW Extensions, {\em Revista Ciencia en Desarrollo} 6 (2) (2015) 205-213.

\bibitem{SuarezLezamaReyes2016}Su\'arez, H.,  Reyes, A., Koszulity for skew PBW extensions over fields, preprint (2016).

\bibitem{Venegas2015}Venegas, C., Automorphisms for skew PBW extensions and skew quantum polynomial rings, {\em Comm.  Algebra}, 43(5) (2015) 1877-1897.

\bibitem{Woodcock1998}Woodcock, D., Cohen-Macaulay complexes and Koszul rings, {\em J. Lond. Math. Soc}. 57 (1998) 398-410.


\end{thebibliography}
\end{document}